%\newtheorem{conjecture}{Conjecture}
%\newtheorem{corollary}{Corollary}
%\newtheorem{criterion}{Criterion}
%\newtheorem{definition}{Definition}
%\newtheorem{example}{Example}
%\newtheorem{exercise}{Exercise}
%\newtheorem{lemma}{Lemma}
%\newtheorem{notation}{Notation}
%\newtheorem{problem}{Problem}
%\newtheorem{proposition}{Proposition}
%\newtheorem{remark}{Remark}
%\newtheorem{solution}{Assumption}
%\newtheorem{summary}{Summary}
%\numberwithin{equation}{section}

\documentclass{amsart}%
\usepackage{amsmath}
\usepackage{color}
\usepackage{amssymb}
\usepackage{amsfonts}
\usepackage{graphicx}%
\setcounter{MaxMatrixCols}{30}
%TCIDATA{OutputFilter=latex2.dll}
%TCIDATA{Version=5.00.0.2606}
%TCIDATA{CSTFile=amsartci.cst}
%TCIDATA{Created=Friday, August 25, 2006 08:24:05}
%TCIDATA{LastRevised=Wednesday, March 13, 2013 19:08:49}
%TCIDATA{<META NAME="GraphicsSave" CONTENT="32">}
%TCIDATA{<META NAME="SaveForMode" CONTENT="1">}
%TCIDATA{BibliographyScheme=Manual}
%TCIDATA{<META NAME="DocumentShell" CONTENT="Articles\SW\AMS Journal Article">}
%TCIDATA{Language=American English}
\newtheorem{theorem}{Theorem}[section]
\theoremstyle{plain}

\newtheorem{lemma}[theorem]{Lemma}

\newtheorem{proposition}[theorem]{Proposition}

\numberwithin{equation}{section}

\begin{document}
\title[A supercritical elliptic problem]{Positive solutions to a supercritical elliptic problem\ which concentrate
along a thin spherical hole}
\author{M\'{o}nica Clapp}
\address{Instituto de Matem\'{a}ticas, Universidad Nacional Aut\'{o}noma de M\'{e}xico,
Circuito Exterior, C.U., 04510 M\'{e}xico D.F., Mexico}
\email{monica.clapp@im.unam.mx}
\author{Jorge Faya}
\address{Instituto de Matem\'{a}ticas, Universidad Nacional Aut\'{o}noma de M\'{e}xico,
Circuito Exterior, C.U., 04510 M\'{e}xico D.F., Mexico}
\email{jorgefaya@gmail.com}
\author{Angela Pistoia}
\address{Dipartimento di Metodi e Modelli Matematici, Universit\'{a} di Roma "La
Sapienza", via Antonio Scarpa 16, 00161 Roma, Italy}
\email{pistoia@dmmm.uniroma1.it}
\thanks{M. Clapp and J. Faya are supported by CONACYT grant 129847 and PAPIIT grant
IN106612 (Mexico). A. Pistoia is supported by Universit\`{a} degli Studi di
Roma "La Sapienza" Accordi Bilaterali "Esistenza e propriet\`{a} geometriche
di soluzioni di equazioni ellittiche non lineari" (Italy).}
\date{March 3, 2013}
\maketitle

\begin{abstract}
We consider the supercritical problem%
\[
-\Delta v=\left\vert v\right\vert ^{p-2}v\quad\text{in }\Theta_{\epsilon
},\qquad v=0\quad\text{on }\partial\Theta_{\epsilon},
\]
where $\Theta$ is a bounded smooth domain in $\mathbb{R}^{N},$ $N\geq3,$
$p>2^{\ast}:=\frac{2N}{N-2},$ and $\Theta_{\epsilon}$ is obtained by deleting
the $\epsilon$-neighborhood of some sphere which is embedded in $\Theta$. In
some particular situations we show that, for $\epsilon>0$ small enough, this
problem has a positive solution $v_{\epsilon}$ and that these solutions
concentrate and blow up along the sphere as $\epsilon\rightarrow0$.

Our approach is to reduce this problem to a critical problem of the form%
\[
-\Delta u=Q(x)\left\vert u\right\vert ^{\frac{4}{n-2}}u\quad\text{in }%
\Omega_{\epsilon},\qquad u=0\quad\text{on }\partial\Omega_{\epsilon},
\]
in a punctured domain $\Omega_{\epsilon}:=\{x\in\Omega:\left\vert x-\xi
_{0}\right\vert >\epsilon\}$ of lower dimension, by means of some Hopf map. We
show that, if $\Omega$ is a bounded smooth domain in $\mathbb{R}^{n},$
$n\geq3$, $\xi_{0}\in\Omega,$ $Q\in C^{2}(\overline{\Omega})$ is positive and
$\nabla Q(\xi_{0})\neq0$ then, for $\epsilon>0$ small enough, this problem has
a positive solution $u_{\epsilon},$ and that these solutions concentrate and
blow up at $\xi_{0}$ as $\epsilon\rightarrow0$.\medskip\ 

\textsc{Key words: }Nonlinear elliptic problem; supercritical problem;
nonautonomous critical problem; positive solutions; domains with a spherical
perforation, blow-up along a sphere.\medskip

\textsc{MSC2010: }35J60, 35J20.

\end{abstract}

\section{Introduction}

We are interested in the supercritical problem%
\begin{equation}
-\Delta v=\left\vert v\right\vert ^{p-2}v\quad\text{in }\mathcal{D},\qquad
v=0\quad\text{on }\partial\mathcal{D}, \label{general}%
\end{equation}
where $\mathcal{D}$ is a bounded smooth domain in $\mathbb{R}^{N},$ $N\geq3,$
and $p>2^{\ast},$ with $2^{\ast}:=\frac{2N}{N-2}$ the critical Sobolev exponent.

Existence of a solution to this problem is a delicate issue. Pohozhaev's
identity \cite{po} implies that (\ref{general}) does not have a nontrivial
solution if $\mathcal{D}$ is strictly starshaped and $p\geq2^{\ast}$. On the
other hand, Kazdan and Warner \cite{kw} showed that infinitely many radial
solutions exist for every $p\in(2,\infty)$ if $\mathcal{D}$ is an annulus. For
$p=2^{\ast}$ Bahri and Coron \cite{bc} established the existence of at least
one positive solution to problem (\ref{general}) in every domain $\mathcal{D}$
having nontrivial reduced homology with $\mathbb{Z}/2$-coefficients. However,
in the supercritical case this is not enough to guarantee existence. In fact,
for each $1\leq k\leq N-3,$ Passaseo \cite{pa1,pa2} exhibited domains having
the homotopy type of a $k$-dimensional sphere in which problem (\ref{general})
does not have a nontrivial solution for $p\geq2_{N,k}^{\ast}:=\frac
{2(N-k)}{N-k-2}.$ Existence may fail even in domains with richer topology, as
shown in \cite{cfp}.

The first nontrivial existence result for $p>2^{\ast}$ was obtained by del
Pino, Felmer and Musso \cite{dfm} in the slightly supercritical case, i.e. for
$p>2^{\ast}$ but close enough to $2^{\ast}.$ For $p$ slightly below
$2_{N,1}^{\ast}$ solutions in certain domains, concentrating at a boundary
geodesic as $p\rightarrow2_{N,1}^{\ast},$ were constructed in \cite{dmp}.

A fruitful approach to produce solutions to the supercritical problem
(\ref{general}) is to reduce it to some critical or subcritical problem in a
domain of lower dimension, either by considering rotational symmetries, or by
means of maps which preserve the laplacian, or by a combination of both. This
approach has been recently taken in \cite{acp, cfp, kp, kp2, pp, wy} to
produce solutions of (\ref{general}) in different types of domains. We shall
also follow this approach to obtain a new type of solutions in domains with
thin spherical perforations.

We start with some notation. Let $O(N)$ be the group of linear isometries of
$\mathbb{R}^{N}$. If $\Gamma$ is a closed subgroup of $O(N)$, we denote by
$\Gamma x:=\{gx:g\in\Gamma\}$ the $\Gamma$-orbit of $x\in\mathbb{R}^{N}$. A
domain $\mathcal{D}$ in $\mathbb{R}^{N}$ is called $\Gamma$-invariant if
$\Gamma x\subset\mathcal{D}$ for all $x\in\mathcal{D},$ and a function
$u:\mathcal{D}\rightarrow\mathbb{R}$ is called $\Gamma$-invariant if $u$ is
constant on every $\Gamma x.$ We denote by
\[
\mathcal{D}^{\Gamma}:=\{x\in\mathcal{D}:gx=x\text{ \ }\forall g\in\Gamma\}
\]
the set of $\Gamma$-fixed points in $\mathcal{D}$.

We consider the problem%
\[
(\wp_{Q,\epsilon}^{\ast})\qquad\left\{
\begin{array}
[c]{ll}%
-\Delta u=Q(x)u^{\frac{n+2}{n-2}} & \text{in }\Omega_{\epsilon},\\
u>0 & \text{in }\Omega_{\epsilon},\\
u=0 & \text{on }\partial\Omega_{\epsilon},
\end{array}
\right.
\]
in
\[
\Omega_{\epsilon}:=\{x\in\Omega:\left\vert x-\xi_{0}\right\vert >\epsilon\},
\]
where $n\geq3,$ $\Omega$ is a bounded smooth domain in $\mathbb{R}^{n}$ which
is invariant under the action of some closed subgroup $\Gamma$ of $O(n),$
$\xi_{0}\in\Omega^{\Gamma},$ and the function $Q\in C^{2}(\overline{\Omega})$
is $\Gamma$-invariant and satisfies $\min_{x\in\overline{\Omega}}Q(x)>0.$ Note
that, since $\xi_{0}\in\Omega^{\Gamma},$ $\Omega_{\epsilon}$ is also $\Gamma$-invariant.

We will prove the following result.

\begin{theorem}
\label{ts}Assume that $\nabla Q(\xi_{0})\neq0.$ Then there exists
$\epsilon_{0}>0$ such that, for each $\epsilon\in(0,\epsilon_{0}),$ problem
$(\wp_{Q,\epsilon}^{\ast})$ has a $\Gamma$-invariant solution $u_{\epsilon}$
which concentrates and blows up at the point $\xi_{0} $ as $\epsilon\to0.$
\end{theorem}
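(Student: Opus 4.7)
The plan is to apply a Lyapunov--Schmidt reduction, using as a first approximation the $H_0^1(\Omega_\epsilon)$-projection of the standard bubble. For $\delta>0$ small and $y\in\Omega^\Gamma$ close to $\xi_0$, let $U_{\delta,y}$ denote the Aubin--Talenti bubble centered at $y$ with concentration $\delta$, and let $PU_{\delta,y}$ be its orthogonal projection onto $H_0^1(\Omega_\epsilon)$. By the principle of symmetric criticality we may work entirely in the subspace of $\Gamma$-invariant functions $H_0^1(\Omega_\epsilon)^\Gamma$; note that $y\in\Omega^\Gamma$ ensures $PU_{\delta,y}$ is $\Gamma$-invariant, and that $\nabla Q(\xi_0)$ lies in $T_{\xi_0}\Omega^\Gamma$ since $Q$ is $\Gamma$-invariant and $\xi_0$ is fixed.

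I would look for a solution of the form $u=Q(y)^{-(n-2)/4}PU_{\delta,y}+\phi$ with $\phi$ small in $H_0^1(\Omega_\epsilon)^\Gamma$ and $L^2$-orthogonal to the kernel of the linearized operator (spanned by $\partial_\delta PU_{\delta,y}$ and by $\partial_{y_i}PU_{\delta,y}$ with $y_i$ ranging over directions tangent to $\Omega^\Gamma$ at $\xi_0$). The standard invertibility of the linearized critical operator on this orthogonal complement, combined with a contraction mapping argument, produces $\phi=\phi_{\delta,y,\epsilon}$ together with a quantitative estimate of its norm. Then $u$ solves $(\wp_{Q,\epsilon}^\ast)$ provided $(\delta,y)$ is a critical point of the reduced functional $J_\epsilon(\delta,y):=I_\epsilon\bigl(Q(y)^{-(n-2)/4}PU_{\delta,y}+\phi_{\delta,y,\epsilon}\bigr)$, where $I_\epsilon$ is the energy functional associated with $(\wp_{Q,\epsilon}^\ast)$.

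The crucial step is to expand $J_\epsilon$ asymptotically. Three contributions compete: (i) the factor $Q(y)^{-(n-2)/2}$ multiplying the leading bubble constant, which through Taylor expansion of $Q$ at $\xi_0$ produces a term proportional to $\nabla Q(\xi_0)\cdot(y-\xi_0)$; (ii) a correction from the outer boundary $\partial\Omega$, of order $\delta^{n-2}$, involving the Robin function of $\Omega$ at $y$, harmless for $y$ near $\xi_0$; and (iii) the contribution of the spherical hole $\partial B_\epsilon(\xi_0)$, which by an inversion/Kelvin computation on the projection error yields a term of order $\bigl(\epsilon/|y-\xi_0|\bigr)^{n-2}\delta^{n-2}$ that pushes the bubble away from $\xi_0$. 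Introducing a scaling $y=\xi_0+\mu t$, $\delta=\mu s$ with $\mu=\mu(\epsilon)$ chosen so that the $Q$-term and the hole-term balance at leading order, one obtains after rescaling a limit functional in $(s,t)$ whose critical point $(s_\ast,t_\ast)$, identified either as a nondegenerate saddle or via a min-max scheme, persists for $\epsilon>0$ small and yields $(\delta_\epsilon,y_\epsilon)$ with $\delta_\epsilon\to 0$ and $y_\epsilon\to\xi_0$, proving the claimed concentration at $\xi_0$.

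The main obstacle is to obtain sufficiently sharp estimates for both $\phi_{\delta,y,\epsilon}$ and the expansion of $J_\epsilon$ in order to detect the precise balance between these three contributions; in particular, the interaction between the bubble and the small spherical hole is delicate because near $\partial B_\epsilon(\xi_0)$ the projection $PU_{\delta,y}$ is not governed by the usual flat-boundary asymptotics when $\delta$ and $\epsilon$ are comparable, and a careful analysis based on the Kelvin transform centered at $\xi_0$ appears necessary. Positivity of $u_\epsilon$ would then follow from the positivity of the ansatz and the smallness of $\phi_{\delta,y,\epsilon}$ relative to the bubble on its main concentration region, or by arguing from the start with a suitably truncated nonlinearity.
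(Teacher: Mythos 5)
Your proposal follows essentially the same route as the paper: a $\Gamma$-equivariant Ljapunov--Schmidt reduction with projected bubbles, a contraction-mapping solution of the orthogonal equation, and an expansion of the reduced energy in which the $\nabla Q(\xi_0)$ term competes against the capacity term created by the hole (with the Robin-function term relevant only for $n=3$), leading to the scaling $\delta\sim\epsilon^{\frac{n-2}{n-1}}$, $|y-\xi_0|\sim\delta$, and a $C^1$-stable critical point of the limit functional. The only quibble is your stated order $\bigl(\epsilon/|y-\xi_0|\bigr)^{n-2}\delta^{n-2}$ for the hole contribution --- in the relevant regime $|y-\xi_0|\sim\delta\gg\epsilon$ the correct order is $(\epsilon/\delta)^{n-2}$ --- but since you defer the choice of the scale $\mu(\epsilon)$ to the balance condition, this does not affect the structure of the argument.
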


Now we describe two situations where one can apply this result to obtain
solutions of supercritical problems which concentrate and blow up at a sphere.

For $N=2,4,8,16$ we write $\mathbb{R}^{N}\mathbb{=K}\times\mathbb{K}$, where
$\mathbb{K}$ is either the real numbers $\mathbb{R}$, the complex numbers
$\mathbb{C}$, the quaternions $\mathbb{H}$ or the Cayley numbers $\mathbb{O}.$
The set of units $\mathbb{S}_{\mathbb{K}}:=\{\vartheta\in\mathbb{K}:\left\vert
\vartheta\right\vert =1\},$ which is a group if $\mathbb{K=R}$, $\mathbb{C}$
or $\mathbb{H}$ and a quasigroup with unit if $\mathbb{K=O}$, acts on
$\mathbb{R}^{N}$ by multiplication on each coordinate, i.e. $\vartheta
(z_{1},z_{2}):=(\vartheta z_{1},\vartheta z_{2}).$ The orbit space of
$\mathbb{R}^{N}$ with respect to this action turns out to be $\mathbb{R}%
^{\dim\mathbb{K}+1}$ and the projection onto the orbit space is the Hopf map
$\mathfrak{h}_{\mathbb{K}}:\mathbb{R}^{N}=\mathbb{K}\times\mathbb{K}%
\rightarrow\mathbb{R}\times\mathbb{K}=\mathbb{R}^{\dim\mathbb{K}+1}$ given by%
\[
\mathfrak{h}_{\mathbb{K}}(z_{1},z_{2}):=(\left\vert z_{1}\right\vert
^{2}-\left\vert z_{2}\right\vert ^{2},\,2\overline{z_{1}}z_{2})\text{.}%
\]
What makes this map special is that it preserves the laplacian. Maps with this
property are called harmonic morphisms \cite{bw,er,w}. More precisely, the
following statement holds true. It can be derived by straightforward
computation (cf. Proposition \ref{ps+}) or from the general theory of harmonic
morphisms as in \cite{cfp}.

\begin{proposition}
\label{propHopf}Let $N=2,4,8,16$ and let $\mathcal{D}$ be an $\mathbb{S}%
_{\mathbb{K}}$-invariant bounded smooth domain in $\mathbb{R}^{N}%
\mathbb{=K}^{2}$ such that $0\notin\overline{\mathcal{D}}.$ Set $\mathcal{U}%
:=\mathfrak{h}_{\mathbb{K}}(\mathcal{D})$. If $u$ is a solution to problem%
\begin{equation}
\left\{
\begin{array}
[c]{ll}%
-\Delta u=\frac{1}{2\left\vert x\right\vert }\left\vert u\right\vert ^{p-2}u &
\text{\emph{in} }\mathcal{U},\\
u=0 & \text{\emph{on} }\partial\mathcal{U},
\end{array}
\right.  \label{prob2}%
\end{equation}
then $v:=u\circ\mathfrak{h}_{\mathbb{K}}$ is an $\mathbb{S}_{\mathbb{K}}%
$-invariant solution of problem \emph{(\ref{general})}. Conversely, if $v$ is
an $\mathbb{S}_{\mathbb{K}}$-invariant solution of problem
\emph{(\ref{general})} and $v=u\circ\mathfrak{h}_{\mathbb{K}},$ then $u$
solves \emph{(\ref{prob2})}.
\end{proposition}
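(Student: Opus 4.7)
My approach is a direct computational one that exploits the defining properties of $\mathfrak{h}_{\mathbb{K}}$. The heart of the argument is the identity
\[
\Delta(u\circ\mathfrak{h}_{\mathbb{K}})(z)=\lambda(z)^{2}\,(\Delta u)(\mathfrak{h}_{\mathbb{K}}(z))
\]
with $\lambda(z)$ an explicit function of $|z|$, together with the scalar identity $|\mathfrak{h}_{\mathbb{K}}(z)|=|z|^{2}$; these two facts will turn equation (\ref{prob2}) applied to $u$ into equation (\ref{general}) applied to $v=u\circ\mathfrak{h}_{\mathbb{K}}$.

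First I would verify the $\mathbb{S}_{\mathbb{K}}$-invariance of $\mathfrak{h}_{\mathbb{K}}$: for $\vartheta\in\mathbb{S}_{\mathbb{K}}$ one has $|\vartheta z_{i}|^{2}=|z_{i}|^{2}$ and $\overline{\vartheta z_{1}}\,(\vartheta z_{2})=\overline{z_{1}}(\overline{\vartheta}\vartheta)z_{2}=\overline{z_{1}}z_{2}$, using $\overline{\vartheta}\vartheta=1$. This is immediate in the associative cases $\mathbb{R},\mathbb{C},\mathbb{H}$ and still goes through for $\mathbb{O}$ because the alternative/Moufang identities control the associator that appears. Hence $v=u\circ\mathfrak{h}_{\mathbb{K}}$ is automatically $\mathbb{S}_{\mathbb{K}}$-invariant; conversely, because the induced map on orbit spaces $(\mathbb{R}^{N}\setminus\{0\})/\mathbb{S}_{\mathbb{K}}\to\mathbb{R}^{\dim\mathbb{K}+1}\setminus\{0\}$ is a homeomorphism, any $\mathbb{S}_{\mathbb{K}}$-invariant $v$ on $\mathcal{D}$ factors uniquely as $v=u\circ\mathfrak{h}_{\mathbb{K}}$ for some $u$ on $\mathcal{U}$. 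The boundary condition $v=0$ on $\partial\mathcal{D}$ translates into $u=0$ on $\partial\mathcal{U}$ because $\mathfrak{h}_{\mathbb{K}}$ is a proper submersion off the origin and $0\notin\overline{\mathcal{D}}$.

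Next comes the Jacobian computation. Each coordinate of $\mathfrak{h}_{\mathbb{K}}$ is a quadratic polynomial in the real components of $(z_{1},z_{2})$; one checks that each such coordinate is harmonic (so $\mathfrak{h}_{\mathbb{K}}$ is itself harmonic) and that the rows of $d\mathfrak{h}_{\mathbb{K}}(z)$ are mutually orthogonal and of equal norm proportional to $|z|$, i.e.\ $d\mathfrak{h}_{\mathbb{K}}(z)\,d\mathfrak{h}_{\mathbb{K}}(z)^{\top}=\lambda(z)^{2}I_{\dim\mathbb{K}+1}$. Feeding these two facts into the chain-rule expansion of $\Delta(u\circ\mathfrak{h}_{\mathbb{K}})$ kills the first-derivative terms coming from $\Delta\mathfrak{h}_{\mathbb{K},j}$ and collapses the second-derivative terms to a diagonal combination, leaving exactly the Laplacian of $u$ multiplied by $\lambda(z)^{2}$. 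Combined with $|\mathfrak{h}_{\mathbb{K}}(z)|=|z|^{2}$, this shows that the prescribed weight $\frac{1}{2|x|}$ in the nonlinearity of (\ref{prob2}) is precisely the factor needed to turn (\ref{prob2}) into (\ref{general}). The converse follows by running the same chain-rule identity backwards on functions of the form $u\circ\mathfrak{h}_{\mathbb{K}}$, using that $\mathfrak{h}_{\mathbb{K}}$ is a submersion away from $0$ so that the pointwise identity $(\Delta u)\circ\mathfrak{h}_{\mathbb{K}}=\lambda^{-2}\Delta v$ fully determines $\Delta u$ on $\mathcal{U}$.

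The main obstacle is bookkeeping rather than conceptual: one must track the constant in the horizontal conformal factor $\lambda(z)^{2}$ carefully so that the weight in (\ref{prob2}) pulls back to exactly the constant $1$ rather than some other positive multiple, and in the octonionic case one must use the alternative laws of $\mathbb{O}$ to justify the cancellation $\overline{\vartheta z_{1}}(\vartheta z_{2})=\overline{z_{1}}z_{2}$ that is automatic in the associative setting. Both points can be handled by direct verification, and as the authors indicate one can alternatively appeal to the general harmonic morphism machinery in \cite{bw,er,w,cfp}, where these identities are packaged cleanly.
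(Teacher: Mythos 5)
Your approach is exactly the one the paper intends: the paper offers no detailed proof, saying only that the statement "can be derived by straightforward computation (cf. Proposition \ref{ps+}) or from the general theory of harmonic morphisms," and your argument — each component of $\mathfrak{h}_{\mathbb{K}}$ is harmonic, the rows of $d\mathfrak{h}_{\mathbb{K}}(z)$ are orthogonal with common norm proportional to $|z|$, hence $\Delta(u\circ\mathfrak{h}_{\mathbb{K}})=\lambda(z)^{2}(\Delta u)\circ\mathfrak{h}_{\mathbb{K}}$, combined with $|\mathfrak{h}_{\mathbb{K}}(z)|=|z|^{2}$ — is precisely that computation. The structural points (the $\mathbb{S}_{\mathbb{K}}$-invariance via $\overline{\vartheta}\vartheta=1$ and the alternative laws for $\mathbb{O}$, the factorization of invariant functions through the orbit map, the transfer of the boundary condition) are all correct.

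The one step you explicitly deferred, however, does not come out as you assert with the normalization in the statement. A direct check (already visible in the case $\mathbb{K}=\mathbb{R}$, where the rows of $d\mathfrak{h}_{\mathbb{R}}$ have norm $2|z|$) gives $\lambda(z)^{2}=4|z|^{2}$, while $|\mathfrak{h}_{\mathbb{K}}(z)|=|z|^{2}$; hence $-\Delta v=4|z|^{2}\cdot\frac{1}{2|z|^{2}}\left\vert v\right\vert ^{p-2}v=2\left\vert v\right\vert ^{p-2}v$, i.e.\ the weight $\frac{1}{2|x|}$ pulls back to the constant $2$, not $1$. This is consistent with the paper's own final remark, which identifies the relevant change of variables as $x=\frac{1}{2}\mathfrak{h}_{\mathbb{R}}(z)$ (equivalently $\rho=\frac{1}{2}r^{2}$, $\varphi=2\theta$, as in \cite{ps}): with the halved map one has $\lambda^{2}=|z|^{2}$ and $|x|=\frac{1}{2}|z|^{2}$, and the weight pulls back to exactly $1$. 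So to make your "direct verification" close, you must either replace $\mathfrak{h}_{\mathbb{K}}$ by $\frac{1}{2}\mathfrak{h}_{\mathbb{K}}$ (or the weight by $\frac{1}{4|x|}$), or absorb the harmless constant by rescaling $u$; as written, your claim that the constants cancel to $1$ is false for the map as defined, even though the approach and everything else in the argument is sound.
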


We apply this result as follows: Let $N=4,8,16$ and let $\Theta$ be an
$\mathbb{S}_{\mathbb{K}}$-invariant bounded smooth domain in $\mathbb{R}%
^{N}\mathbb{=K}^{2}$ such that $0\notin\overline{\Theta}.$ Fix a point
$z_{0}\in\Theta$ and for each $\epsilon>0$ small enough let%
\[
\Theta_{\epsilon}:=\{z\in\Theta:\text{dist}(z,\mathbb{S}_{\mathbb{K}}%
z_{0})>\epsilon\}
\]
where $\mathbb{S}_{\mathbb{K}}z_{0}:=\{\vartheta z:\vartheta\in\mathbb{S}%
_{\mathbb{K}}\}.$ This is again an $\mathbb{S}_{\mathbb{K}}$-invariant bounded
smooth domain in $\mathbb{K}^{2}.$ We consider the supercritical problem%
\[
(\wp_{\epsilon}^{1})\qquad\left\{
\begin{array}
[c]{ll}%
-\Delta v=v^{\frac{\dim\mathbb{K}+3}{\dim\mathbb{K}-1}} & \text{in }%
\Theta_{\epsilon},\\
v>0 & \text{in }\Theta_{\epsilon},\\
u=0 & \text{on }\partial\Theta_{\epsilon}.
\end{array}
\right.
\]
Then, Theorem \ref{ts} with $n:=\dim\mathbb{K}+1,$ $\Gamma=\{1\}$,
$\Omega:=\mathfrak{h}_{\mathbb{K}}(\Theta),$ $\xi_{0}:=\mathfrak{h}%
_{\mathbb{K}}(z_{0})$ and $Q(x):=\frac{1}{2\left\vert x\right\vert },$
together with Proposition \ref{propHopf},\ immediately yields the following result.

\begin{theorem}
\label{thmsup1}There exists $\epsilon_{0}>0$ such that, for each $\epsilon
\in(0,\epsilon_{0}),$ the supercritical problem $(\wp_{\epsilon}^{1})$\ has an
$\mathbb{S}_{\mathbb{K}}$-invariant solution $v_{\epsilon}$ which concentrates
and blows up along the sphere $\mathbb{S}_{\mathbb{K}}z_{0}$ as $\epsilon
\rightarrow0$.
\end{theorem}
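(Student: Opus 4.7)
The plan is to reduce $(\wp_{\epsilon}^{1})$ to the critical problem $(\wp_{Q,\widetilde{\epsilon}}^{\ast})$ of Theorem~\ref{ts} via the Hopf map $\mathfrak{h}_{\mathbb{K}}$ and then lift the resulting solution back by composition, using Proposition~\ref{propHopf}. Concretely, take $n:=\dim\mathbb{K}+1$, $\Gamma:=\{1\}$, $\Omega:=\mathfrak{h}_{\mathbb{K}}(\Theta)$, $\xi_{0}:=\mathfrak{h}_{\mathbb{K}}(z_{0})$ and $Q(x):=\tfrac{1}{2\left\vert x\right\vert }$, as forced by the form of equation \eqref{prob2}, and note that $(n+2)/(n-2)=(\dim\mathbb{K}+3)/(\dim\mathbb{K}-1)$, so the exponents match.

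The first step is to verify the hypotheses of Theorem~\ref{ts}. Since $0\notin\overline{\Theta}$ and $\mathfrak{h}_{\mathbb{K}}$ is a smooth $\mathbb{S}_{\mathbb{K}}$-invariant submersion away from the origin, the image $\Omega=\mathfrak{h}_{\mathbb{K}}(\Theta)$ is a bounded smooth domain in $\mathbb{R}^{n}$. The identity $\left\vert \mathfrak{h}_{\mathbb{K}}(z)\right\vert =\left\vert z\right\vert ^{2}$ forces $0\notin\overline{\Omega}$, so $Q\in C^{2}(\overline{\Omega})$ with $\min_{\overline{\Omega}}Q>0$, and $\nabla Q(\xi_{0})=-\xi_{0}/(2\left\vert \xi_{0}\right\vert ^{3})\neq 0$. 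Because $\Gamma=\{1\}$, the invariance conditions in Theorem~\ref{ts} are vacuous.

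The next step is to match the punctured regions. A short algebraic computation, using $\left\vert \mathfrak{h}_{\mathbb{K}}(z)\right\vert =\left\vert z\right\vert ^{2}$ and expanding the relevant inner products, yields an identity of the form
\[
\left\vert \mathfrak{h}_{\mathbb{K}}(z)-\xi_{0}\right\vert =\mathrm{dist}\bigl(z,\mathbb{S}_{\mathbb{K}}z_{0}\bigr)\cdot\rho(z,z_{0}),
\]
where $\rho(z,z_{0})\to 2\left\vert z_{0}\right\vert $ as $z\to\mathbb{S}_{\mathbb{K}}z_{0}$. Consequently, for every sufficiently small $\epsilon>0$ the distance tube $\Theta_{\epsilon}$ coincides, after a smooth monotone reparametrization $\widetilde{\epsilon}=\widetilde{\epsilon}(\epsilon)\to 0$, with the preimage $\mathfrak{h}_{\mathbb{K}}^{-1}(\Omega_{\widetilde{\epsilon}})$. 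This geometric correspondence between a distance tube and a true ball-complement is the one nontrivial point in the argument; if exact equality is desired, one can equivalently work from the outset with $\mathfrak{h}_{\mathbb{K}}^{-1}(\Omega_{\epsilon})$ in place of $\Theta_{\epsilon}$, to which Theorem~\ref{ts} and Proposition~\ref{propHopf} apply verbatim.

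Finally, Theorem~\ref{ts} supplies for each such $\widetilde{\epsilon}$ a positive solution $u_{\widetilde{\epsilon}}$ of $(\wp_{Q,\widetilde{\epsilon}}^{\ast})$ in $\Omega_{\widetilde{\epsilon}}$ that concentrates and blows up at $\xi_{0}$. Setting $v_{\epsilon}:=u_{\widetilde{\epsilon}}\circ\mathfrak{h}_{\mathbb{K}}$, Proposition~\ref{propHopf} produces a positive $\mathbb{S}_{\mathbb{K}}$-invariant solution of $(\wp_{\epsilon}^{1})$ on $\Theta_{\epsilon}$; and since $\mathfrak{h}_{\mathbb{K}}^{-1}(\xi_{0})=\mathbb{S}_{\mathbb{K}}z_{0}$, the point concentration of $u_{\widetilde{\epsilon}}$ at $\xi_{0}$ lifts directly to concentration and blow-up of $v_{\epsilon}$ along the entire orbit sphere $\mathbb{S}_{\mathbb{K}}z_{0}$, as required.
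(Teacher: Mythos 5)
Your argument is exactly the paper's: Theorem \ref{thmsup1} is obtained there by applying Theorem \ref{ts} with $n=\dim\mathbb{K}+1$, $\Gamma=\{1\}$, $\Omega=\mathfrak{h}_{\mathbb{K}}(\Theta)$, $\xi_{0}=\mathfrak{h}_{\mathbb{K}}(z_{0})$, $Q(x)=\frac{1}{2\left\vert x\right\vert}$ and then lifting via Proposition \ref{propHopf}, which is precisely your reduction, and your hypothesis checks (in particular $\left\vert\mathfrak{h}_{\mathbb{K}}(z)\right\vert=\left\vert z\right\vert^{2}$, hence $0\notin\overline{\Omega}$ and $\nabla Q(\xi_{0})\neq0$) are correct. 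The one inaccuracy is your claim that the distance tube $\Theta_{\epsilon}$ coincides with $\mathfrak{h}_{\mathbb{K}}^{-1}(\Omega_{\widetilde{\epsilon}})$ after a reparametrization $\widetilde{\epsilon}(\epsilon)$ --- since $\left\vert\mathfrak{h}_{\mathbb{K}}(z)-\xi_{0}\right\vert=\mathrm{dist}(z,\mathbb{S}_{\mathbb{K}}z_{0})\cdot\rho(z,z_{0})$ with $\rho$ nonconstant, no single $\widetilde{\epsilon}$ makes the sets equal exactly --- but you flag this yourself, and your fallback of taking $\mathfrak{h}_{\mathbb{K}}^{-1}(\Omega_{\epsilon})$ as the perforated domain is the honest reading of the paper, which passes over this same point in silence.
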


Now let $O(m)\times O(m)$ act on $\mathbb{R}^{2m}\equiv\mathbb{R}^{m}%
\times\mathbb{R}^{m}$ in the obvious way and $O(m)$ act on the last $m$
coordinates of $\mathbb{R}^{m+1}\equiv\mathbb{R}\times\mathbb{R}^{m}.$ We
write the elements of $\mathbb{R}^{2m}$ as $(y_{1},y_{2})$ with $y_{i}%
\in\mathbb{R}^{m}\ $and the elements of $\mathbb{R}^{m+1}$ as $x=(t,\zeta)$
with $t\in\mathbb{R},$ $\zeta\in\mathbb{R}^{m}.$ Recently Pacella and Srikanth
showed that the real Hopf map provides a one-to-one correspondence between
$\left[  O(m)\times O(m)\right]  $-invariant solutions of a supercritical
problem in a domain in $\mathbb{R}^{2m}$ and $O(m)$-invariant solutions of a
critical problem in some domain in $\mathbb{R}^{m+1}$. In \cite{ps} they
proved the following result.

\begin{proposition}
\label{ps}Let $N=2m,$ $m\geq2,$ and $\mathcal{D}$ be an $\left[  O(m)\times
O(m)\right]  $-invariant bounded smooth domain in $\mathbb{R}^{2m}$ such that
$0\notin\overline{\mathcal{D}}.$ Set
\[
\mathcal{U}:=\{(t,\zeta)\in\mathbb{R}\times\mathbb{R}^{m}:\mathfrak{h}%
_{\mathbb{R}}(\left\vert y_{1}\right\vert ,\left\vert y_{2}\right\vert
)=(t,\left\vert \zeta\right\vert )\text{ for some }(y_{1},y_{2})\in
\mathcal{D}\}.
\]
If $u(t,\zeta)=\mathfrak{u}(t,\left\vert \zeta\right\vert )$ is an
$O(m)$-invariant solution of problem%
\begin{equation}
\left\{
\begin{array}
[c]{ll}%
-\Delta u=\frac{1}{2\left\vert x\right\vert }\left\vert u\right\vert ^{p-2}u &
\text{\emph{in} }\mathcal{U},\\
u=0 & \text{\emph{on} }\partial\mathcal{U},
\end{array}
\right.  \label{prob3}%
\end{equation}
then $v(y_{1},y_{2}):=\mathfrak{u}(\mathfrak{h}_{\mathbb{R}}(\left\vert
y_{1}\right\vert ,\left\vert y_{2}\right\vert ))$ is an $\left[  O(m)\times
O(m)\right]  $-invariant solution of problem \emph{(\ref{general}).}

Conversely, if $v(y_{1},y_{2})=\mathfrak{v}(\left\vert y_{1}\right\vert
,\left\vert y_{2}\right\vert )$ is an $\left[  O(m)\times O(m)\right]
$-invariant solution of problem \emph{(\ref{general}) }and $\mathfrak{v}%
=\mathfrak{u}\circ\mathfrak{h}_{\mathbb{R}},$ then $u(t,\zeta)=\mathfrak{u}%
(t,\left\vert \zeta\right\vert )$ is an $O(m)$-invariant solution of problem
\emph{(\ref{prob2})}.
\end{proposition}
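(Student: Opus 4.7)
The plan is to reduce Proposition \ref{ps} to the same chain-rule identity that underlies Proposition \ref{propHopf} with $\mathbb{K}=\mathbb{R}$, but applied to the \emph{radial profiles} of the invariant functions rather than to the full variables. Since $\mathcal{D}$ is $[O(m)\times O(m)]$-invariant, every such invariant function $v$ on $\mathcal{D}$ writes as $v(y_1,y_2)=\mathfrak{v}(r_1,r_2)$ with $r_i:=|y_i|$, and every $O(m)$-invariant $u$ on $\mathcal{U}$ writes as $u(t,\zeta)=\mathfrak{u}(t,\rho)$ with $\rho:=|\zeta|$. The prescribed relation $\mathfrak{v}=\mathfrak{u}\circ\mathfrak{h}_{\mathbb{R}}$ is then just the identity $\mathfrak{v}(r_1,r_2)=\mathfrak{u}(r_1^2-r_2^2,\,2r_1 r_2)$, and the very definition of $\mathcal{U}$ ensures that the restriction of $\mathfrak{h}_{\mathbb{R}}$ to the open quadrant $\{r_1,r_2>0\}$ carries the radial projection of $\mathcal{D}$ bijectively onto that of $\mathcal{U}$ in the open half-plane $\{\rho>0\}$.

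The central computation is to rewrite each Laplacian in its polar-type form,
\[
\Delta v=\mathfrak{v}_{r_1 r_1}+\frac{m-1}{r_1}\mathfrak{v}_{r_1}+\mathfrak{v}_{r_2 r_2}+\frac{m-1}{r_2}\mathfrak{v}_{r_2},\qquad \Delta u=\mathfrak{u}_{tt}+\mathfrak{u}_{\rho\rho}+\frac{m-1}{\rho}\mathfrak{u}_{\rho},
\]
and then apply the chain rule twice to $\mathfrak{v}(r_1,r_2)=\mathfrak{u}(r_1^2-r_2^2,\,2r_1 r_2)$. After expansion, the mixed $\mathfrak{u}_{t\rho}$ contributions cancel between the two $r_i$-pieces, the bare first-derivative-in-$t$ terms also cancel, and the remaining terms collapse into
\[
\Delta v=4(r_1^2+r_2^2)\left[\mathfrak{u}_{tt}+\mathfrak{u}_{\rho\rho}+\frac{m-1}{\rho}\mathfrak{u}_{\rho}\right]=4|x|\,\Delta u,
\]
where the algebraic identity $|x|^2=t^2+\rho^2=(r_1^2-r_2^2)^2+4r_1^2 r_2^2=(r_1^2+r_2^2)^2$ gives $|x|=r_1^2+r_2^2$. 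This is the real-Hopf incarnation of the horizontally-conformal harmonic-morphism identity behind Proposition \ref{propHopf}.

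With $\Delta v=4|x|\,\Delta u$ established, the equivalence of the two problems is immediate: substituting the equation $-\Delta u=(2|x|)^{-1}|u|^{p-2}u$ of (\ref{prob3}) into $-\Delta v=-4|x|\,\Delta u$ produces the supercritical equation of (\ref{general}) on $\mathcal{D}$, up to absorbing any overall positive constant by a trivial rescaling $v\mapsto\lambda v$, exactly as in Proposition \ref{propHopf}. Reading the identity backwards converts any $[O(m)\times O(m)]$-invariant solution of (\ref{general}) into an $O(m)$-invariant solution of (\ref{prob3}). Positivity transfers pointwise; the Dirichlet boundary condition transfers because $\mathfrak{h}_{\mathbb{R}}$ is a homeomorphism of the closed first quadrant onto the closed upper half-plane; and the hypothesis $0\notin\overline{\mathcal{D}}$ guarantees $|x|=r_1^2+r_2^2>0$ on $\overline{\mathcal{U}}$, so the weight $1/(2|x|)$ is smooth and bounded there. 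The main obstacle is purely computational---keeping careful track of the chain-rule expansion and verifying the cancellations that collapse the four-term expression for $\Delta v$ into the clean product $4|x|\,\Delta u$; everything else is structural orbit-space bookkeeping.
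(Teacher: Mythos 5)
Your computation is correct and is essentially the argument the paper itself records: Proposition \ref{ps} is only cited from \cite{ps} (where it is done in polar coordinates $\rho=\tfrac{1}{2}r^{2}$, $\varphi=2\theta$), but the Cartesian chain-rule verification you carry out on the radial profiles is exactly the computation the paper performs in Proposition \ref{ps+}, of which Proposition \ref{ps} is the case $k_{1}=k_{2}=m$. Two small points are worth flagging. First, the constant: with $\mathfrak{h}_{\mathbb{R}}(z_1,z_2)=(z_1^2-z_2^2,2z_1z_2)$ as literally defined, the conformal factor is $4|z|^{2}=4|x|$ (as you find), so the weight $\tfrac{1}{2|x|}$ produces $-\Delta v=2|v|^{p-2}v$ rather than $|v|^{p-2}v$; the clean constant requires the normalized map $\tfrac{1}{2}\mathfrak{h}_{\mathbb{R}}$, which is what the paper's closing remark says is used in \cite{ps} (the factor $2|z|^{2}$ written in the proof of Proposition \ref{ps+} corresponds to that normalization). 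Your observation that the discrepancy is absorbed by the rescaling $v\mapsto 2^{1/(p-2)}v$ is a legitimate fix, though strictly speaking the proposition asserts that $v=\mathfrak{u}\circ\mathfrak{h}_{\mathbb{R}}$ itself solves the equation. Second, the polar forms of the two Laplacians and your chain-rule identity are valid only on the open set where $r_{1},r_{2},\rho>0$; since $v$ and $u$ are $C^{2}$ across the axes (an $O(m)$-invariant $C^2$ function is a smooth function of $(t,|\zeta|^2)$) and both sides of $\Delta v=4|x|\,\Delta u\circ\mathfrak{h}_{\mathbb{R}}$ are continuous, the identity extends by density, but a sentence to that effect should be added; the paper's own proof of Proposition \ref{ps+} is silent on the same point.
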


We apply this result as follows: Let $\Phi$ be an $\left[  O(m)\times
O(m)\right]  $-invariant bounded smooth domain in $\mathbb{R}^{2m}$ such that
$0\notin\overline{\Phi}$ and $(y_{0},0)\in\Phi.$ We write $S_{0}%
^{m-1}:=\{(y,0):\left\vert y\right\vert =\left\vert y_{0}\right\vert \}$ for
the $\left[  O(m)\times O(m)\right]  $-orbit of $(y_{0},0),$ and for each
$\epsilon>0$ small enough we set%
\[
\Phi_{\epsilon}:=\{x\in\Phi:\text{dist}(x,S_{0}^{m-1})>\epsilon\}.
\]
This is again an $\left[  O(m)\times O(m)\right]  $-invariant bounded smooth
domain in $\mathbb{R}^{2m}.$ We consider the supercritical problem%
\[
(\wp_{\epsilon}^{2})\qquad\left\{
\begin{array}
[c]{ll}%
-\Delta v=v^{\frac{m+3}{m-1}} & \text{in }\Phi_{\epsilon},\\
v>0 & \text{in }\Phi_{\epsilon},\\
u=0 & \text{on }\partial\Phi_{\epsilon}.
\end{array}
\right.
\]
Then, Theorem \ref{ts} with $n=m+1,$ $\Gamma=O(m),$
\[
\Omega:=\{(t,\zeta)\in\mathbb{R}\times\mathbb{R}^{m}:\mathfrak{h}_{\mathbb{R}%
}(\left\vert y_{1}\right\vert ,\left\vert y_{2}\right\vert )=(t,\left\vert
\zeta\right\vert )\text{ for some }(y_{1},y_{2})\in\Phi\},
\]
$\xi_{0}:=(\left\vert y_{0}\right\vert ,0,\ldots,0)$ and $Q(x)=\frac
{1}{2\left\vert x\right\vert },$ together with Proposition \ref{ps}%
,\ immediately yields the following result.

\begin{theorem}
There exists $\epsilon_{0}>0$ such that, for each $\epsilon\in(0,\epsilon
_{0}),$ problem $(\wp_{\epsilon})$ has an $\left[  O(m)\times O(m)\right]
$-invariant solution $v_{\epsilon}$ which concentrates and blows up along the
$(m-1)$-dimensional sphere $S_{0}^{m-1} $ as $\epsilon\rightarrow0$.
\end{theorem}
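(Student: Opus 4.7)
The plan is to deduce the result as a direct application of Theorem~\ref{ts} combined with the real Hopf reduction of Proposition~\ref{ps}. With $n := m+1$, $\Gamma := O(m)$, $Q(x) := \frac{1}{2\left\vert x\right\vert }$, and $\xi_0 := (\left\vert y_{0}\right\vert ,0,\ldots ,0)$, I would take $\Omega \subset \mathbb{R}^{m+1}$ to be the $O(m)$-invariant domain associated with $\Phi$ by the construction in Proposition~\ref{ps}. The job is then to check the hypotheses of Theorem~\ref{ts} and to match up the punctured domains.

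First I would verify the hypotheses of Theorem~\ref{ts}. Since $0\notin\overline{\Phi}$, also $0\notin\overline{\Omega}$, so $Q$ belongs to $C^{2}(\overline{\Omega})$ and is positive on $\overline{\Omega}$; moreover $\nabla Q(x)=-x/(2\left\vert x\right\vert ^{3})$ never vanishes, so in particular $\nabla Q(\xi_{0})\neq 0$. The point $\xi_{0}$ lies on the $t$-axis, which is exactly the fixed-point set $\Omega^{O(m)}$, and $\Omega$ inherits boundedness from $\Phi$. Smoothness of $\Omega$ near the axis (where the Hopf projection degenerates) is the only non-trivial point; it is a local question and follows from the same analysis that underlies Proposition~\ref{ps}, using that $\mathfrak{h}_{\mathbb{R}}$ is a local diffeomorphism off the origin.

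Next I would compare the two punctured domains. The map $(y_{1},y_{2})\mapsto\mathfrak{h}_{\mathbb{R}}(\left\vert y_{1}\right\vert ,\left\vert y_{2}\right\vert )$ has a non-degenerate normal differential along the orbit $S_{0}^{m-1}$, so there exist constants $0<c\leq C$ such that, for all sufficiently small $\epsilon>0$, the image of $\Phi_{\epsilon}$ under the Hopf reduction lies between $\Omega_{c\epsilon}$ and $\Omega_{C\epsilon}$. Hence one can choose $\delta=\delta(\epsilon)\to 0$ as $\epsilon\to 0$ so that $[O(m)\times O(m)]$-invariant solutions of $(\wp_{\epsilon}^{2})$ on $\Phi_{\epsilon}$ correspond bijectively, via Proposition~\ref{ps}, to $O(m)$-invariant solutions of $(\wp_{Q,\delta}^{\ast})$ on $\Omega_{\delta}$.

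Finally I would apply Theorem~\ref{ts} to obtain, for each small $\delta>0$, a positive $O(m)$-invariant solution $u_{\delta}$ of $(\wp_{Q,\delta}^{\ast})$ concentrating and blowing up at $\xi_{0}$; setting $v_{\epsilon}:=u_{\delta(\epsilon)}\circ\mathfrak{h}_{\mathbb{R}}$, Proposition~\ref{ps} yields an $[O(m)\times O(m)]$-invariant positive solution of $(\wp_{\epsilon}^{2})$. Since $\mathfrak{h}_{\mathbb{R}}^{-1}(\xi_{0})=S_{0}^{m-1}$, concentration of $u_{\delta}$ at $\xi_{0}$ lifts to concentration of $v_{\epsilon}$ along the sphere $S_{0}^{m-1}$ as $\epsilon\to 0$. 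The main obstacle in this reduction is the careful verification that $\Omega$ is a smooth bounded domain near the axis $\zeta = 0$ and the precise matching of the $\epsilon$-tube around $S_{0}^{m-1}$ with a spherical $\delta$-ball around $\xi_{0}$; everything else is contained in Theorem~\ref{ts}, whose proof is the substantive part of the paper.
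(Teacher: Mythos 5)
Your proposal is essentially the paper's own argument: the theorem is stated there as an immediate consequence of Theorem~\ref{ts} (with $n=m+1$, $\Gamma=O(m)$, $Q(x)=\frac{1}{2\left\vert x\right\vert}$, $\xi_{0}=(\left\vert y_{0}\right\vert,0,\dots,0)$ and $\Omega$ the Hopf reduction of $\Phi$) combined with Proposition~\ref{ps}, exactly as you set it up. The one point where you go beyond the paper is the matching of the $\epsilon$-tube around $S_{0}^{m-1}$ with a spherical hole around $\xi_{0}$ --- which the paper silently elides --- and your instinct is right that this needs care, since the image of $\Phi_{\epsilon}$ is comparable to but not literally equal to any $\Omega_{\delta}$; the clean fix is either to take the deleted tube to be the preimage of the round ball $B(\xi_{0},\epsilon)$ or to note that the reduction of Section~2 tolerates such perturbed holes.
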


The proof of Theorem \ref{ts} uses the well-known Ljapunov-Schmidt reduction,
adapted to the symmetric case. In the following section we sketch this
reduction, highlighting the places where the symmetries play a role. In
section 3 we give an expansion of the reduced energy functional and use it to
prove Theorem \ref{ts}. We conclude with some remarks concerning Proposition
\ref{ps}.

\section{The finite dimensional reduction}

For every bounded domain $\mathcal{U}$ in $\mathbb{R}^{n}$ we take
\[
(u,v):=\int_{\mathcal{U}}\nabla u\cdot\nabla v,\qquad\left\Vert u\right\Vert
:=\left(  \int_{\mathcal{U}}\left\vert \nabla u\right\vert ^{2}\right)
^{1/2},
\]
as the inner product and its corresponding norm in $H_{0}^{1}(\mathcal{U}).$
If we replace $\mathcal{U}$ by $\mathbb{R}^{n}$ these are the inner product
and the norm in $D^{1,2}(\mathbb{R}^{n}).$ We write
\[
\left\Vert u\right\Vert _{r}:=(\int_{\mathcal{U}}\left\vert u\right\vert
^{r})^{1/r}%
\]
for the norm in $L^{r}(\mathcal{U})$, $r\in\lbrack1,\infty).$

If $\mathcal{U}$\ is $\Gamma$-invariant for some closed subgroup $\Gamma$ of
$O(n)$ we set%
\[
H_{0}^{1}(\mathcal{U})^{\Gamma}:=\{u\in H_{0}^{1}(\mathcal{U}):u\text{ is
}\Gamma\text{-invariant}\}
\]
and, similarly, for $D^{1,2}(\mathbb{R}^{n})^{\Gamma}$ and $L^{r}%
(\mathcal{U})^{\Gamma}.$

It is well known that the standard bubbles
\[
U_{\delta,\xi}(x)=[n(n-2)]^{\frac{n-2}{4}}\frac{\delta^{\frac{n-2}{2}}%
}{(\delta^{2}+|x-\xi|^{2})^{\frac{n-2}{2}}}\qquad\delta\in(0,\infty),\text{
\ }\xi\in\mathbb{R}^{n},
\]
are the only positive solutions of the equation
\[
-\Delta U=U^{p}\quad\text{in \ }\mathbb{R}^{n},
\]
where $p:=\frac{n+2}{n-2}.$ Thus, the function $W_{\delta,\xi}:=\gamma
_{0}U_{\delta,\xi},$ with $\gamma_{0}:=[Q(\xi_{0})]^{\frac{-1}{p-1}}$, solves
the equation
\begin{equation}
-\Delta W=Q(\xi_{0})W^{p}\text{\qquad in \ }\mathbb{R}^{n}.\label{ecgamaU}%
\end{equation}
Let
\begin{align}
\psi_{\delta,\xi}^{0} &  :=\frac{\partial U_{\delta,\xi}}{\partial\delta
}=\alpha_{n}\frac{n-2}{2}\delta^{\frac{n-4}{2}}\frac{|x-\xi|^{2}-\delta^{2}%
}{(\delta^{2}+|x-\xi|^{2})^{n/2}},\label{psi}\\
\psi_{\delta,\xi}^{j} &  :=\frac{\partial U_{\delta,\xi}}{\partial\xi_{j}%
}=\alpha_{n}(n-2)\delta^{\frac{n-2}{2}}\frac{x_{j}-\xi_{j}}{(\delta^{2}%
+|x-\xi|^{2})^{n/2}},\text{\qquad}j=1,\dots,n.\nonumber
\end{align}
The space generated by these $n+1$ functions is the space of solutions to the
problem
\begin{equation}
-\Delta\psi=pU_{\delta,\xi}^{p-1}\psi,\qquad\psi\in D^{1,2}(\mathbb{R}%
^{n}).\label{ecpsi}%
\end{equation}
Note that%
\[
U_{\delta,\xi}\in D^{1,2}(\mathbb{R}^{n})^{\Gamma}\qquad\text{iff}\qquad\xi
\in(\mathbb{R}^{n})^{\Gamma}%
\]
and, similarly, for every $j=0,1,\dots,n,$
\[
\psi_{\delta,\xi}^{j}\in D^{1,2}(\mathbb{R}^{n})^{\Gamma}\qquad\text{iff}%
\qquad\xi\in(\mathbb{R}^{n})^{\Gamma}.
\]

Let $\Omega$ be a $\Gamma$-invariant bounded smooth domain in $\mathbb{R}%
^{n},$ $Q\in C^{2}(\overline{\Omega})$ be positive and $\Gamma$-invariant, and
$\xi_{0}\in\Omega^{\Gamma}$. For $\epsilon>0$ small enough set
\[
\Omega_{\epsilon}:=\{x\in\Omega:\left\vert x-\xi_{0}\right\vert >\epsilon\}.
\]
Consider the orthogonal projection $P_{\epsilon}:D^{1,2}(\mathbb{R}%
^{n})\rightarrow H_{0}^{1}(\Omega_{\epsilon})$, i.e. if $W\in D^{1,2}%
(\mathbb{R}^{n})$ then $P_{\epsilon}W$ is the unique solution to the problem
\begin{equation}
-\Delta\left(  P_{\epsilon}W\right)  =-\Delta W\text{\quad in \ }%
\Omega_{\epsilon},\qquad P_{\epsilon}W=0\text{\quad on \ }\partial
\Omega_{\epsilon}. \label{ecu5}%
\end{equation}
A consequence of the uniqueness is that $P_{\epsilon}W\in H_{0}^{1}%
(\Omega_{\epsilon})^{\Gamma}$ if $W\in D^{1,2}(\mathbb{R}^{n})^{\Gamma}.$

We denote by $G(x,y)$ the Green function of the Laplace operator in $\Omega$
with zero Dirichlet boundary condition and by $H(x,y)$ its regular part, i.e.%
\[
G(x,y)=\beta_{n}\left(  \frac{1}{\left\vert x-y\right\vert ^{n-2}%
}-H(x,y)\right)  ,
\]
where $\beta_{n}$ is a positive constant depending only on $n.$ The following
estimates will play a crucial role in the proof of Theorem \ref{ts}.

\begin{lemma}
\label{lemGeMuPi}Assume that $\delta\rightarrow0$ as $\epsilon\rightarrow0$
and $\epsilon=o(\delta)$ as $\epsilon\rightarrow0$. Fix $\eta\in\mathbb{R}%
^{n},$ set $\xi:=\xi_{0}+\delta\eta,$ and define
\[
R(x):=P_{\epsilon}U_{\delta,\xi}(x)-U_{\delta,\xi}(x)+\alpha_{n}\delta
^{\frac{n-2}{2}}H(x,\xi)+\frac{\alpha_{n}}{\delta^{\frac{n-2}{2}}(1+|\eta
|^{2})^{\frac{n-2}{2}}}\frac{\epsilon^{n-2}}{|x-\xi_{0}|^{n-2}}.
\]
Then there exists a positive constant $c$ such that the following estimates
hold true for every $x\in\Omega\smallsetminus B(\xi_{0},\epsilon)$:
\begin{align*}
\left|  R(x)\right|   &  \leq c\delta^{\frac{n-2}{2}}\left[  \frac
{\epsilon^{n-2}(1+\epsilon\delta^{-n+1})}{|x-\xi_{0}|^{n-2}}+\delta
^{2}+\left(  \frac{\epsilon}{\delta}\right)  ^{n-2}\right]  ,\\
\left|  \partial_{\delta}R(x)\right|   &  \leq c\delta^{\frac{n-4}{2}}\left[
\frac{\epsilon^{n-2}(1+\epsilon\delta^{-n+1})}{|x-\xi_{0}|^{n-2}}+\delta
^{2}+\left(  \frac{\epsilon}{\delta}\right)  ^{n-2}\right]  ,\\
\left|  \partial_{\xi_{i}}R(x)\right|   &  \leq c\delta^{\frac{n}{2}}\left[
\frac{\epsilon^{n-2}(1+\epsilon\delta^{-n})}{|x-\xi_{0}|^{n-2}}+\delta^{2}+
\frac{\epsilon^{n-2}}{\delta^{n-1}} \right]  .
\end{align*}

\end{lemma}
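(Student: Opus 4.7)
The proof rests on a single observation: the function $R$ is harmonic in $\Omega_\epsilon$. Indeed, (\ref{ecu5}) gives $-\Delta(P_\epsilon U_{\delta,\xi}-U_{\delta,\xi})=0$ in $\Omega_\epsilon$, the regular part $H(\cdot,\xi)$ of the Green function is harmonic throughout $\Omega$, and $|x-\xi_0|^{-(n-2)}$ is harmonic on $\mathbb{R}^n\setminus\{\xi_0\}$. It therefore suffices to bound $|R|$ on the two components of $\partial\Omega_\epsilon$ and then compare $R$ with an explicit harmonic barrier via the maximum principle.

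On $\partial\Omega$, $P_\epsilon U_{\delta,\xi}$ vanishes and $G(x,\xi)=0$ forces the boundary identity $H(x,\xi)=|x-\xi|^{-(n-2)}$. Combining this with the expansion
\[
U_{\delta,\xi}(x)=\alpha_n\frac{\delta^{(n-2)/2}}{|x-\xi|^{n-2}}+O\bigl(\delta^{(n+2)/2}\bigr),
\]
valid when $|x-\xi|$ stays bounded away from $0$, the first two terms in $R$ cancel modulo $O(\delta^{(n+2)/2})$, while the singular correction contributes at most $O(\epsilon^{n-2}\delta^{-(n-2)/2})$. On $\partial B(\xi_0,\epsilon)$ the last term of $R$ equals the constant $U_{\delta,\xi}(\xi_0)$, which by design cancels the leading value of $U_{\delta,\xi}(x)$ there; since $|x-\xi_0|=\epsilon=o(\delta)$, a Taylor expansion yields $U_{\delta,\xi}(x)-U_{\delta,\xi}(\xi_0)=O(\epsilon\,\delta^{-n/2})$, while $\alpha_n\delta^{(n-2)/2}H(x,\xi)$ is $O(\delta^{(n-2)/2})$ uniformly. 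Together these give $|R|\le c\,\delta^{(n-2)/2}(1+\epsilon\delta^{-n+1})$ on the inner sphere.

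With these two boundary bounds in hand I would take the harmonic barrier
\[
\Phi(x):=c\,\delta^{(n-2)/2}\Bigl[\delta^2+(\epsilon/\delta)^{n-2}\Bigr]+c\,\delta^{(n-2)/2}\frac{\epsilon^{n-2}(1+\epsilon\delta^{-n+1})}{|x-\xi_0|^{n-2}},
\]
chosen so that its constant part dominates $|R|$ on $\partial\Omega$ and its singular part, which equals $c\,\delta^{(n-2)/2}(1+\epsilon\delta^{-n+1})$ on $\partial B(\xi_0,\epsilon)$, dominates $|R|$ there. The maximum principle applied to $\pm R-\Phi$ then yields the first claimed inequality.

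For the derivative bounds I differentiate (\ref{ecu5}) in $\delta$ and $\xi_i$. Uniqueness of the projection gives $\partial_\delta P_\epsilon U_{\delta,\xi}=P_\epsilon\psi_{\delta,\xi}^0$ and $\partial_{\xi_i}P_\epsilon U_{\delta,\xi}=P_\epsilon\psi_{\delta,\xi}^i$, so both $\partial_\delta R$ and $\partial_{\xi_i}R$ are again harmonic in $\Omega_\epsilon$. Repeating the boundary analysis with $\psi_{\delta,\xi}^j$ replacing $U_{\delta,\xi}$, and using the corresponding identity $\partial_{y_i}H(x,y)\big|_{x\in\partial\Omega}=\partial_{y_i}|x-y|^{-(n-2)}$, produces the analogous bounds; the altered prefactors $\delta^{(n-4)/2}$ and $\delta^{n/2}$ simply track the different homogeneities of $\psi^0$ and $\psi^i$, while the sharper singular factor $\epsilon^{n-2}/\delta^{n-1}$ in the third inequality arises when $\partial_{\xi_i}$ hits the coefficient $(1+|\eta|^2)^{-(n-2)/2}$ of the correction term, with $\eta=(\xi-\xi_0)/\delta$. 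The main obstacle is precisely this bookkeeping: each differentiation of the correction term must be paired with the right factor in the bubble expansion, and the Taylor expansions of $\psi_{\delta,\xi}^j$ near $\partial B(\xi_0,\epsilon)$ must be carried to the correct order, in order to match the claimed sharp powers of $\delta$ and $\epsilon$.
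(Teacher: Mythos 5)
The paper does not actually prove this lemma; it simply cites Lemma 3.1 of \cite{Ge-Mu-Pi}. Your strategy --- observe that $R$ is harmonic in $\Omega_{\epsilon}$, estimate it on the two boundary components, and conclude by comparison with the explicit harmonic barrier $\Phi$ --- is exactly the standard argument used in that reference, and your treatment of the first estimate is complete and correct: on $\partial\Omega$ the identity $H(x,\xi)=|x-\xi|^{-(n-2)}$ cancels $U_{\delta,\xi}$ up to $O(\delta^{(n+2)/2})$, on $\partial B(\xi_{0},\epsilon)$ the correction term equals $U_{\delta,\xi}(\xi_{0})$ and cancels $U_{\delta,\xi}(x)$ up to $O(\epsilon\delta^{-n/2})=O(\delta^{(n-2)/2}\epsilon\delta^{-n+1})$, and the barrier is built precisely to dominate these boundary values.

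The gap is in the two derivative estimates, which you describe but do not prove. The identities $\partial_{\delta}P_{\epsilon}U_{\delta,\xi}=P_{\epsilon}\psi^{0}_{\delta,\xi}$ and $\partial_{\xi_{i}}P_{\epsilon}U_{\delta,\xi}=P_{\epsilon}\psi^{i}_{\delta,\xi}$ and the harmonicity of $\partial_{\delta}R$, $\partial_{\xi_{i}}R$ are fine, but the boundary bookkeeping you defer is not routine, and it is exactly where the stated powers must be verified. For instance, on $\partial\Omega$ the cancellation between $\psi^{i}_{\delta,\xi}(x)$ and $\alpha_{n}\delta^{\frac{n-2}{2}}\partial_{\xi_{i}}|x-\xi|^{-(n-2)}$ leaves a remainder of order $\delta^{\frac{n+2}{2}}$, while the claimed bound allows only $\delta^{\frac{n}{2}}\cdot\delta^{2}=\delta^{\frac{n+4}{2}}$ there; similarly $\partial_{\xi_{i}}$ applied to the coefficient $U_{\delta,\xi}(\xi_{0})=\alpha_{n}\delta^{-\frac{n-2}{2}}(1+|\eta|^{2})^{-\frac{n-2}{2}}$ produces a term of order $\delta^{-n/2}\epsilon^{n-2}/|x-\xi_{0}|^{n-2}$, which exceeds the corresponding term $\delta^{n/2}\,\epsilon^{n-2}(1+\epsilon\delta^{-n})/|x-\xi_{0}|^{n-2}$ by a factor $\delta^{-1}$. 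So either these contributions cancel against pieces you have not identified, or the exponents must be tracked more carefully than your sketch does (one must also fix unambiguously whether $\partial_{\xi_{i}}$ acts on the $\eta$-dependence of the correction term, as you assume). Until this computation is actually carried out to the stated orders, the second and third inequalities remain unproved; for a self-contained argument you should either complete it or, as the paper does, invoke Lemma 3.1 of \cite{Ge-Mu-Pi} directly.
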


\begin{proof}
See Lemma 3.1 in \cite{Ge-Mu-Pi}.
\end{proof}

For each $\epsilon>0$ and $(d,\eta)\in\Lambda^{\Gamma}:=(0,\infty
)\times(\mathbb{R}^{n})^{\Gamma}$ set (see \eqref{ecgamaU})%
\[
V_{d,\eta}:=P_{\epsilon}W_{\delta,\xi}=\gamma_{0}P_{\epsilon}U_{\delta,\xi
}\text{\qquad with \ }\delta:=d\epsilon^{\frac{n-2}{n-1}},\ \ \xi:=\xi
_{0}+\delta\eta.
\]
The map $(d,\eta)\mapsto V_{d,\eta}$ is a $C^{2}$-embedding of $\Lambda
^{\Gamma}$ as a submanifold of $H_{0}^{1}(\Omega_{\epsilon})^{\Gamma}$, whose
tangent space at $V_{d,\eta}$ is
\[
K_{d,\eta}^{\epsilon}:=\text{span}\{P_{\epsilon}\psi_{\delta,\xi}%
^{j}:j=0,1,\dots,n\}.
\]
Note that, since $\xi_{0},\eta\in(\mathbb{R}^{n})^{\Gamma},$ also $\xi
\in(\mathbb{R}^{n})^{\Gamma}$ and, therefore, $K_{d,\eta}^{\epsilon}\subset
H_{0}^{1}(\Omega_{\epsilon})^{\Gamma}.$ We write%
\[
K_{d,\eta}^{\epsilon,\bot}:=\{\phi\in H_{0}^{1}(\Omega_{\epsilon})^{\Gamma
}:(\phi,P_{\epsilon}\psi_{\delta,\xi}^{j})=0\text{ \ for }j=0,1,\dots,n\}
\]
for the orthogonal complement of $K_{d,\eta}^{\epsilon}$ in $H_{0}^{1}%
(\Omega_{\epsilon})^{\Gamma}$, and $\Pi_{d,\eta}^{\epsilon}:H_{0}^{1}%
(\Omega_{\epsilon})^{\Gamma}\rightarrow K_{d,\eta}^{\epsilon}$ and
$\Pi_{d,\eta}^{\epsilon,\bot}:H_{0}^{1}(\Omega_{\epsilon})^{\Gamma}\rightarrow
K_{d,\eta}^{\epsilon,\bot}$ for the orthogonal projections, i.e.
\[
\Pi_{d,\eta}^{\epsilon}(u):=\sum_{j=0}^{n}(u,P_{\epsilon}\psi_{\delta,\xi}%
^{j})P_{\epsilon}\psi_{\delta,\xi}^{j},\qquad\Pi_{d,\eta}^{\epsilon,\bot
}(u):=u-\Pi_{d,\eta}^{\epsilon}(u).
\]

Let $i_{\epsilon}^{\ast}:L^{\frac{2n}{n+2}}(\Omega_{\epsilon})\rightarrow
H_{0}^{1}(\Omega_{\epsilon})$ be the adjoint operator to the embedding
$i_{\epsilon}:H_{0}^{1}(\Omega_{\epsilon})\hookrightarrow L^{\frac{2n}{n-2}%
}(\Omega_{\epsilon})$, i.e. $v=i_{\epsilon}^{\ast}(u)$ if and only if%
\[
(v,\varphi)=\int_{\Omega_{\epsilon}}u\varphi\qquad\forall\varphi\in
C_{c}^{\infty}(\Omega_{\epsilon})
\]
if and only if
\begin{equation}
-\Delta v=u\text{\quad in }\Omega_{\epsilon},\qquad v=0\text{\quad on
}\partial\Omega_{\epsilon}. \label{ecu2}%
\end{equation}
Sobolev's inequality yields a constant $c>0,$ independent of $\epsilon$, such that%

\begin{equation}
\left\Vert i_{\epsilon}^{\ast}(u)\right\Vert \leq c\left\Vert u\right\Vert
_{\frac{2n}{n+2}}\qquad\forall u\in L^{\frac{2n}{n+2}}(\Omega_{\epsilon
}),\ \ \forall\epsilon>0. \label{ecu1}%
\end{equation}
Note again that
\[
i_{\epsilon}^{\ast}(u)\in H_{0}^{1}(\Omega_{\epsilon})^{\Gamma}\qquad\text{if
\ }u\in L^{\frac{2n}{n-2}}(\Omega_{\epsilon})^{\Gamma}.
\]

We rewrite problem $(\wp_{Q,\epsilon}^{\ast})$ in the following equivalent
way:
\begin{equation}
\left\{
\begin{array}
[c]{l}%
u=i_{\epsilon}^{\ast}\left[  Q(x)f(u)\right]  ,\\
u\in H_{0}^{1}(\Omega_{\epsilon}),
\end{array}
\right.  \label{defp}%
\end{equation}
where $f(s):=(s^{+})^{p}$ and $p:=\frac{n+2}{n-2}$.

We shall look for a solution to problem (\ref{defp}) of the form
\begin{equation}
u_{\epsilon}=V_{d,\eta}+\phi\text{\qquad with }(d,\eta)\in\Lambda^{\Gamma
}\text{ and }\phi\in K_{d,\eta}^{\epsilon,\bot}. \label{V}%
\end{equation}
As usual, our goal will be to find $(d,\eta)\in\Lambda^{\Gamma}$ and $\phi\in
K_{d,\eta}^{\epsilon,\bot}$ such that, for $\epsilon$ small enough,%
\begin{equation}
\Pi_{d,\eta}^{\epsilon,\bot}[V_{d,\eta}+\phi-i_{\epsilon}^{\ast}(Qf(V_{d,\eta
}+\phi))]=0 \label{ecpiort}%
\end{equation}
and%
\begin{equation}
\Pi_{d,\eta}^{\epsilon}[V_{d,\eta}+\phi-i_{\epsilon}^{\ast}(Qf(V_{d,\eta}%
+\phi))]=0. \label{ecpi}%
\end{equation}
First we will show that, for every $(d,\eta)\in\Lambda^{\Gamma}$ and
$\epsilon$ small enough, there exists an unique $\phi\in K_{d,\eta}%
^{\epsilon,\bot}$ which satisfies (\ref{ecpiort}). To this aim we consider the
linear operator $L_{d,\eta}^{\epsilon}:K_{d,\eta}^{\epsilon,\bot}\rightarrow
K_{d,\eta}^{\epsilon,\bot}$ defined by
\[
L_{d,\eta}^{\epsilon}(\phi):=\phi-\Pi_{d,\eta}^{\epsilon,\bot}\,i_{\epsilon
}^{\ast}[Qf^{\prime}(V_{d,\eta})\phi].
\]
It has the following properties.

\begin{proposition}
\label{invertibility} For every compact subset $D$ of $\Lambda^{\Gamma}$ there
exist $\epsilon_{0}>0$ and $c>0$ such that, for each $\epsilon\in
(0,\epsilon_{0})$ and each $(d,\eta)\in D,$
\begin{equation}
\left\Vert L_{d,\eta}^{\epsilon}(\phi)\right\Vert \geq c\left\Vert
\phi\right\Vert \text{\qquad for all }\phi\in K_{d,\eta}^{\epsilon,\bot},
\label{inv}%
\end{equation}
and the operator $L_{d,\eta}^{\epsilon}$ is invertible.
\end{proposition}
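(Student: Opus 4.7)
The plan is to argue by contradiction, along the classical lines of Lyapunov--Schmidt invertibility proofs. Suppose the estimate \eqref{inv} fails. Then there exist sequences $\epsilon_k\to 0$, $(d_k,\eta_k)\in D$, and $\phi_k\in K_{d_k,\eta_k}^{\epsilon_k,\bot}$ with $\|\phi_k\|=1$ and $\|L_{d_k,\eta_k}^{\epsilon_k}(\phi_k)\|\to 0$. By compactness of $D$, after passing to a subsequence we may assume $(d_k,\eta_k)\to(d_0,\eta_0)\in D$. Setting $\delta_k:=d_k\epsilon_k^{(n-2)/(n-1)}$ and $\xi_k:=\xi_0+\delta_k\eta_k$, note that $\delta_k\to 0$ and $\epsilon_k/\delta_k\to 0$.

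The first step is to rescale: define $\tilde\phi_k(y):=\delta_k^{(n-2)/2}\phi_k(\xi_k+\delta_k y)$, extended by zero outside $(\Omega_{\epsilon_k}-\xi_k)/\delta_k$. Since the hole $(\xi_0-\xi_k)/\delta_k+B(0,\epsilon_k/\delta_k)$ shrinks to the point $-\eta_0$ and $(\Omega-\xi_k)/\delta_k$ exhausts $\mathbb{R}^n$, the natural ambient space in the limit is $D^{1,2}(\mathbb{R}^n)^\Gamma$ (removing a single point does not affect $D^{1,2}$). The $\tilde\phi_k$ are bounded in $D^{1,2}$, so we extract a weak limit $\tilde\phi_0$. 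Writing the equation $L_{d_k,\eta_k}^{\epsilon_k}(\phi_k)=o(1)$ as $\phi_k-i_{\epsilon_k}^\ast[Qf'(V_{d_k,\eta_k})\phi_k]\in K_{d_k,\eta_k}^{\epsilon_k}$ and testing against $\phi_k$ together with smooth compactly supported test functions pulled back from $\mathbb{R}^n$, one shows that $\tilde\phi_0$ satisfies the limiting linearized equation
\[
-\Delta\tilde\phi_0=p\,Q(\xi_0)W_{1,0}^{p-1}\tilde\phi_0=pU_{1,0}^{p-1}\tilde\phi_0\quad\text{in }\mathbb{R}^n,
\]
where the second equality uses $\gamma_0^{p-1}Q(\xi_0)=1$. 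Hence $\tilde\phi_0$ lies in the span of $\psi_{1,0}^0,\psi_{1,0}^1,\dots,\psi_{1,0}^n$ by the characterization of solutions to \eqref{ecpsi}.

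The next step is to upgrade the orthogonality conditions $(\phi_k,P_{\epsilon_k}\psi_{\delta_k,\xi_k}^{j})=0$ to the limit. Using Lemma \ref{lemGeMuPi} (applied to $\psi_{\delta,\xi}^j$ in place of $U_{\delta,\xi}$, i.e. the standard projection estimates for the derivatives), one checks that $P_{\epsilon_k}\psi_{\delta_k,\xi_k}^j$, rescaled at the same scale, converges strongly in $D^{1,2}(\mathbb{R}^n)$ to $\psi_{1,0}^j$. Passing to the limit in the inner products forces $(\tilde\phi_0,\psi_{1,0}^j)_{D^{1,2}(\mathbb{R}^n)}=0$ for every $j=0,1,\dots,n$, and therefore $\tilde\phi_0\equiv 0$.

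Finally one derives the contradiction with $\|\phi_k\|=1$. From $\phi_k=\Pi_{d_k,\eta_k}^{\epsilon_k,\bot}i_{\epsilon_k}^\ast[Qf'(V_{d_k,\eta_k})\phi_k]+o(1)$ and \eqref{ecu1},
\[
\|\phi_k\|^2=\int_{\Omega_{\epsilon_k}}Qf'(V_{d_k,\eta_k})\phi_k^2+o(1)=p\int_{\Omega_{\epsilon_k}}QW_{\delta_k,\xi_k}^{p-1}\phi_k^2+o(1).
\]
After the change of variables $x=\xi_k+\delta_k y$, the integrand becomes $pQ(\xi_k+\delta_k y)\gamma_0^{p-1}U_{1,0}^{p-1}(y)\tilde\phi_k^2(y)$; since $U_{1,0}^{p-1}$ decays like $|y|^{-4}$ at infinity, this integral is concentrated in any fixed large ball, on which $\tilde\phi_k\to\tilde\phi_0=0$ strongly in $L^2_{\text{loc}}$ by Rellich's theorem. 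Therefore $\|\phi_k\|\to 0$, contradicting $\|\phi_k\|=1$. Once \eqref{inv} is established, invertibility of $L_{d,\eta}^\epsilon$ follows by the Fredholm alternative (or directly, since it is a compact perturbation of the identity on $K_{d,\eta}^{\epsilon,\bot}$ and is injective by \eqref{inv}).

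The main obstacle I anticipate is ensuring that the orthogonality conditions survive the rescaling. This requires the strong convergence of the rescaled $P_{\epsilon_k}\psi_{\delta_k,\xi_k}^j$ to $\psi_{1,0}^j$ in $D^{1,2}(\mathbb{R}^n)$, which is exactly where the hypothesis $\epsilon=o(\delta)$ enters through the remainder estimates of Lemma \ref{lemGeMuPi}; without this, $\tilde\phi_0$ would not be forced to be zero and the contradiction would fail.
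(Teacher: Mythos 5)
Your argument is correct and is essentially the same as the paper's: the paper simply cites Lemma 5.1 of Ge--Musso--Pistoia, whose proof is precisely this contradiction/blow-up scheme (rescale a normalized sequence, pass to the limit equation in $D^{1,2}(\mathbb{R}^n)$ using that the shrinking hole has vanishing capacity, kill the limit via the nondegeneracy of $U_{1,0}$ and the orthogonality conditions, and contradict $\|\phi_k\|=1$ through the concentration of $U_{\delta,\xi}^{p-1}$). The only step you compress is showing that the $K_{d,\eta}^{\epsilon}$-component of $\phi_k-i_{\epsilon}^{\ast}[Qf'(V_{d,\eta})\phi_k]$ has vanishing coefficients, which is routine.
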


\begin{proof}
The argument given in \cite{Ge-Mu-Pi} to prove Lemma 5.1 carries over with
minor changes to our situation.
\end{proof}

The following estimates may be found in \cite{Li}.

\begin{lemma}
\label{lem1}For each $a,b,q\in\mathbb{R}$ with $a\geq0$ and $q\geq1$ there
exists a positive constant $c$ such that the following inequalities hold%
\[
\left\vert \left\vert a+b\right\vert ^{q}-a^{q}\right\vert \leq\left\{
\begin{array}
[c]{ll}%
c\min\{\left\vert b\right\vert ^{q},a^{q-1}\left\vert b\right\vert \} &
\text{if }0<q<1,\\
c(\left\vert a\right\vert ^{q-1}\left\vert b\right\vert +\left\vert
b\right\vert ^{q}) & \text{if }q\geq1.
\end{array}
\right.
\]

\end{lemma}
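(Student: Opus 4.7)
The plan is to scale out $a$ and then do a short case analysis based on whether $|b|$ is small or large relative to $a$. If $a=0$ the inequality reads $|b|^q \leq c \cdot (\mathrm{RHS})$, which is immediate (interpreting $a^{q-1}|b|$ as $+\infty$ when $a=0$ and $q<1$). If $a>0$, set $t:=b/a$; then $|a+b|^q - a^q = a^q(|1+t|^q - 1)$, $a^{q-1}|b| = a^q|t|$, and $|b|^q = a^q|t|^q$, so the claim is equivalent to the one-variable inequality
\[
\bigl||1+t|^q - 1\bigr| \leq c \cdot \begin{cases} \min\{|t|^q,\,|t|\} & \text{if } 0<q<1, \\ |t| + |t|^q & \text{if } q\geq 1, \end{cases}
\]
for all $t\in\mathbb{R}$, with $c$ depending only on $q$. (I read the paper's hypothesis $q\geq 1$ as a typo for $q>0$; otherwise the first branch would be vacuous.)

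For $q\geq 1$ I split at $|t|=1$. If $|t|\leq 1$, the interval with endpoints $1$ and $|1+t|$ lies in $[0,2]$, so the mean value theorem applied to $s\mapsto s^q$ yields $\bigl||1+t|^q - 1\bigr| \leq q\,2^{q-1}\bigl||1+t|-1\bigr| \leq q\,2^{q-1}|t|$. If $|t|\geq 1$, the crude bound $|1+t|^q + 1 \leq (2|t|)^q + |t|^q = (2^q+1)|t|^q$ suffices. Adding these, $c=\max\{q\,2^{q-1},\,2^q+1\}$ works.

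For $0<q<1$ I establish the two bounds $\bigl||1+t|^q - 1\bigr| \leq |t|^q$ and $\bigl||1+t|^q - 1\bigr| \leq C_q|t|$ separately; their minimum is the desired estimate. The first is the Hölder inequality $|x^q - y^q| \leq |x-y|^q$ for $x,y\geq 0$ and $q\in(0,1]$, which follows from subadditivity of $s\mapsto s^q$ on $[0,\infty)$. For the second I split at $|t|=1/2$: if $|t|\leq 1/2$, then $|1+t|\in[1/2,3/2]$ and the mean value theorem with $q s^{q-1}$ maximized at $s=1/2$ gives $\bigl||1+t|^q - 1\bigr| \leq q\,2^{1-q}|t|$; if $|t|\geq 1/2$, the Hölder bound yields $\bigl||1+t|^q - 1\bigr| \leq |t|^q = |t|\cdot|t|^{q-1} \leq 2^{1-q}|t|$. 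The main (and only) delicate point is that for $0<q<1$ the derivative of $s^q$ blows up at the origin, so one cannot apply the mean value theorem on an interval that reaches $0$; the split at $|t|=1/2$ is precisely what keeps us bounded away from there, which is why this elementary estimate is usually just quoted from \cite{Li} rather than reproduced.
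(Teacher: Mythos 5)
Your proof is correct and complete. The paper itself does not prove this lemma at all --- it simply refers the reader to \cite{Li} --- so there is no argument in the paper to compare against; your contribution is to supply the missing elementary proof. Your reduction by homogeneity (dividing by $a^q$ and passing to the single variable $t=b/a$) is the standard way to handle such scale-invariant inequalities, and each branch of your case analysis checks out: for $q\geq 1$ the mean value theorem on $[0,2]$ handles $|t|\leq 1$ and the crude bound $(1+|t|)^q+1\leq(2^q+1)|t|^q$ handles $|t|\geq 1$; for $0<q<1$ the subadditivity bound $|x^q-y^q|\leq|x-y|^q$ gives the $|t|^q$ estimate, and the split at $|t|=1/2$ correctly keeps the mean value point away from the singularity of $qs^{q-1}$ at the origin for the $|t|$ estimate. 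You are also right that the hypothesis ``$q\geq 1$'' in the statement must be a misprint for $q>0$, since otherwise the first branch of the alternative is empty, and your convention $a^{q-1}|b|=+\infty$ when $a=0$ and $q<1$ (so that the minimum is $|b|^q$) is the natural reading. What your write-up buys over the paper's citation is self-containedness and an explicit constant; what the citation buys is brevity, since the estimate is folklore. No gaps.
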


Again, the argument given to prove similar results in the literature carries
over with minor changes to prove the following result. We include it this time
to illustrate this fact and also because some of the estimates will be used
later on.

\begin{proposition}
\label{prop3} For every compact subset $D$ of $\Lambda^{\Gamma}$ there exist
$\epsilon_{0}>0$ and $c>0$ such that, for each $\epsilon\in(0,\epsilon_{0})$
and for each $(d,\eta)\in D,$ there exists a unique $\phi_{d,\eta}^{\epsilon
}\in K_{d,\eta}^{\epsilon,\bot}\subset H_{0}^{1}(\Omega_{\epsilon})^{\Gamma}$
which solves equation \emph{(\ref{ecpiort})} and satisfies
\begin{equation}
\left\Vert \phi_{d,\eta}^{\epsilon}\right\Vert \leq c\epsilon^{\frac{n-2}%
{n-1}}. \label{ecu21}%
\end{equation}
Moreover, the function $(d,\eta)\mapsto\phi_{d,\eta}^{\epsilon}$ is a $C^{1}$-map.
\end{proposition}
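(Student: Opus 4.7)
The argument is a standard contraction mapping scheme for the Lyapunov-Schmidt reduction. I would first recast (\ref{ecpiort}) as a fixed point equation. Setting
$$R_{d,\eta}^{\epsilon} := \Pi_{d,\eta}^{\epsilon,\bot}\bigl[V_{d,\eta} - i_{\epsilon}^{\ast}(Qf(V_{d,\eta}))\bigr], \qquad N_{d,\eta}^{\epsilon}(\phi) := i_{\epsilon}^{\ast}\bigl[Q\bigl(f(V_{d,\eta}+\phi) - f(V_{d,\eta}) - f'(V_{d,\eta})\phi\bigr)\bigr],$$
equation (\ref{ecpiort}) becomes $L_{d,\eta}^{\epsilon}(\phi) = -R_{d,\eta}^{\epsilon} + \Pi_{d,\eta}^{\epsilon,\bot}N_{d,\eta}^{\epsilon}(\phi)$. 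By Proposition \ref{invertibility}, for $\epsilon$ small this is equivalent to the fixed point problem
$$\phi = T_{d,\eta}^{\epsilon}(\phi) := (L_{d,\eta}^{\epsilon})^{-1}\bigl[-R_{d,\eta}^{\epsilon} + \Pi_{d,\eta}^{\epsilon,\bot}N_{d,\eta}^{\epsilon}(\phi)\bigr],$$
and I would apply the Banach fixed point theorem on the closed ball of radius $K\epsilon^{(n-2)/(n-1)}$ in $K_{d,\eta}^{\epsilon,\bot}$ for a suitable constant $K>0$.

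The main estimate, and the core of the argument, is
$$\|R_{d,\eta}^{\epsilon}\| \leq c\,\epsilon^{(n-2)/(n-1)} \quad \text{uniformly for } (d,\eta) \in D.$$
Since $-\Delta V_{d,\eta} = \gamma_{0}Q(\xi_{0})U_{\delta,\xi}^{p} = Q(\xi_0)W_{\delta,\xi}^p$ in $\Omega_{\epsilon}$, the definition of $i_{\epsilon}^{\ast}$ gives $V_{d,\eta} - i_{\epsilon}^{\ast}(Qf(V_{d,\eta})) = i_{\epsilon}^{\ast}\bigl[Q(\xi_{0})W_{\delta,\xi}^{p} - Q(x)V_{d,\eta}^{p}\bigr]$, which by (\ref{ecu1}) is controlled by the $L^{2n/(n+2)}$-norm of the integrand. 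Splitting this as
$$(Q(\xi_{0})-Q(x))W_{\delta,\xi}^{p} + Q(x)\bigl(W_{\delta,\xi}^{p} - V_{d,\eta}^{p}\bigr),$$
the first piece is handled by the bound $|Q(x) - Q(\xi_{0})| \leq c|x - \xi_{0}|$ together with an explicit integration against the bubble (this is the term forcing the $\epsilon^{(n-2)/(n-1)}$ rate through the balance with the second piece), while the second piece is handled by Lemma \ref{lem1} applied to $s \mapsto s^{p}$ combined with the pointwise estimate for $|V_{d,\eta} - W_{\delta,\xi}| = \gamma_{0}|P_{\epsilon}U_{\delta,\xi} - U_{\delta,\xi}|$ supplied by Lemma \ref{lemGeMuPi}. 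With the scaling $\delta = d\,\epsilon^{(n-2)/(n-1)}$ the two contributions balance at the announced rate.

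The nonlinear estimates are routine: Lemma \ref{lem1}, Sobolev's inequality (\ref{ecu1}), and H\"older's inequality yield
$$\|N_{d,\eta}^{\epsilon}(\phi)\| \leq c\|\phi\|^{\min(2,p)}, \qquad \|N_{d,\eta}^{\epsilon}(\phi_{1}) - N_{d,\eta}^{\epsilon}(\phi_{2})\| \leq c\bigl(\|\phi_{1}\| + \|\phi_{2}\|\bigr)^{\min(1,p-1)}\|\phi_{1} - \phi_{2}\|.$$
Combined with the uniform lower bound $\|L_{d,\eta}^{\epsilon}(\phi)\| \geq c\|\phi\|$ of Proposition \ref{invertibility} and the bound on $R_{d,\eta}^{\epsilon}$, these make $T_{d,\eta}^{\epsilon}$ a strict contraction on the ball of radius $K\epsilon^{(n-2)/(n-1)}$, yielding the unique $\phi_{d,\eta}^{\epsilon}$ satisfying (\ref{ecu21}). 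The $C^{1}$ dependence on $(d,\eta)$ then follows from the implicit function theorem applied to the jointly $C^{1}$ map $F(\phi,d,\eta) := \phi - T_{d,\eta}^{\epsilon}(\phi)$, whose partial derivative with respect to $\phi$ at the solution is invertible by the same contraction estimate. I expect the main obstacle to be the sharp estimate on $R_{d,\eta}^{\epsilon}$: the punctured geometry of $\Omega_{\epsilon}$ introduces the extra term $\epsilon^{n-2}/|x-\xi_{0}|^{n-2}$ in the projection of the bubble, and its interaction with the deviation $Q(x) - Q(\xi_{0})$ must be tracked carefully through Lemma \ref{lemGeMuPi} to see that it does not worsen the $\epsilon^{(n-2)/(n-1)}$ rate.
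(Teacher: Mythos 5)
Your proposal is correct and follows essentially the same route as the paper: the same fixed-point reformulation via $(L_{d,\eta}^{\epsilon})^{-1}$, the same three-term splitting of the error (quadratic remainder of $f$, the difference $(P_{\epsilon}U_{\delta,\xi})^{p}-U_{\delta,\xi}^{p}$ controlled by Lemma \ref{lemGeMuPi}, and the deviation $Q-Q(\xi_{0})$ against the bubble), and a contraction on the ball of radius $c\epsilon^{\frac{n-2}{n-1}}$. The only cosmetic difference is that both error pieces are simply $O(\delta)$ with $\delta=d\epsilon^{\frac{n-2}{n-1}}$ rather than arising from a ``balance,'' but this does not affect the argument.
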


\begin{proof}
Note that $\phi\in K_{d,\eta}^{\epsilon,\bot}$ solves equation (\ref{ecpiort})
if and only if $\phi$ is a fixed point of the operator $T_{d,\eta}^{\epsilon
}:K_{d,\eta}^{\epsilon,\bot}\rightarrow K_{d,\eta}^{\epsilon,\bot}$ defined by%
\[
T_{d,\eta}^{\epsilon}(\phi)=(L_{d,\eta}^{\epsilon})^{-1}\Pi_{d,\eta}%
^{\epsilon,\bot}i_{\epsilon}^{\ast}\left[  Qf(V_{d,\eta}+\phi)-Qf^{\prime
}(V_{d,\eta})\phi-Q(\xi_{0})(\gamma_{0}U_{\delta,\xi})^{p}\right]  .
\]
We will prove that $T_{d,\eta}^{\epsilon}$ is a contraction on a suitable ball.

To this aim, we first show that there exist $\epsilon_{0}>0$ and $c>0$ such
that for, each $\epsilon\in(0,\epsilon_{0}),$
\begin{equation}
\left\Vert \phi\right\Vert \leq c\epsilon^{\frac{n-2}{n-1}}\quad
\Rightarrow\quad\left\Vert T_{d,\eta}^{\epsilon}(\phi)\right\Vert \leq
c\epsilon^{\frac{n-2}{n-1}}. \label{30}%
\end{equation}
From Proposition \ref{invertibility} we have that, for some $c>0$ and
$\epsilon$ small enough,%
\[
\left\Vert (L_{d,\eta}^{\epsilon})^{-1}\right\Vert \leq c\text{\qquad}%
\forall(d,\eta)\in D.
\]
Using (\ref{ecu1}) we obtain%
\begin{align*}
\left\Vert T_{d,\eta}^{\epsilon}(\phi)\right\Vert  &  \leq c\left\Vert
Q\left[  f(V_{d,\eta}+\phi)-f^{\prime}(V_{d,\eta})\phi\right]  -Q(\xi
_{0})(\gamma_{0}U_{\delta,\xi})^{p}\right\Vert _{\frac{2n}{n+2}}\\
&  \leq c\left\Vert Q\left[  f(V_{d,\eta}+\phi)-f(V_{d,\eta})-f^{\prime
}(V_{d,\eta})\phi\right]  \right\Vert _{\frac{2n}{n+2}}\\
&  \qquad+c\left\Vert Qf(V_{d,\eta})-Q(\gamma_{0}U_{\delta,\xi})^{p}%
\right\Vert _{\frac{2n}{n+2}}+c\gamma_{0}^{p}\left\Vert \left[  Q-Q(\xi
_{0})\right]  U_{\delta,\xi}^{p}\right\Vert _{\frac{2n}{n+2}}.
\end{align*}
Using the mean value theorem, Lemma \ref{lem1} and the H\"{o}lder inequality
we have that, for some $t\in(0,1)$,%
\begin{align*}
\left\Vert Q\left[  f(V_{d,\eta}+\phi)-f(V_{d,\eta})-f^{\prime}(V_{d,\eta
})\phi\right]  \right\Vert _{\frac{2n}{n+2}}  &  \leq c\left\Vert [f^{\prime
}(V_{d,\eta}+t\phi)-f^{\prime}(V_{d,\eta})]\phi\right\Vert _{\frac{2n}{n+2}}\\
&  \leq c\left\Vert f^{\prime}(V_{d,\eta}+t\phi)-f^{\prime}(V_{d,\eta
})\right\Vert _{n/2}\left\Vert \phi\right\Vert _{2^{\ast}}\\
&  \leq c(\left\Vert \phi\right\Vert _{2^{\ast}}+\left\Vert \phi\right\Vert
_{2^{\ast}}^{\frac{4}{n-2}})\left\Vert \phi\right\Vert _{2^{\ast}}\\
&  \leq c(\left\Vert \phi\right\Vert _{2^{\ast}}^{2}+\left\Vert \phi
\right\Vert _{2^{\ast}}^{p}).
\end{align*}
Moreover, using Lemma \ref{lemGeMuPi}\ one can show that%
\begin{align}
&  \left\Vert Qf(V_{d,\eta})-Q(\gamma_{0}U_{\delta,\xi})^{p}\right\Vert
_{\frac{2n}{n+2}}\leq c\left\Vert (P_{\epsilon}U_{\delta,\xi})^{p}%
-U_{\delta,\xi}^{p}\right\Vert _{\frac{2n}{n+2}}\nonumber\\
&  \leq\left(  c\int_{\Omega_{\epsilon}}\left\vert U_{\delta,\xi}%
^{p-1}(P_{\epsilon}U_{\delta,\xi}-U_{\delta,\xi})\right\vert ^{\frac{2n}{n+2}%
}+c\int_{\Omega_{\epsilon}}\left\vert P_{\epsilon}U_{\delta,\xi}-U_{\delta
,\xi}\right\vert ^{p+1}\right)  ^{\frac{n+2}{2n}}\label{power}\\
&  \leq c\delta,\nonumber
\end{align}
see inequality (6.4) in \cite{Ge-Mu-Pi}. Finally, setting $y=\frac{x-\xi
}{\delta}=\frac{x-\xi_{0}}{\delta}-\eta$\ and $\widetilde{\Omega}_{\epsilon
}:=\{y\in\mathbb{R}^{n}:\delta y+\xi\in\Omega_{\epsilon}\},$ and using the
mean value theorem, for some $t\in(0,1)$ we obtain%
\begin{align}
\left\Vert \left[  Q-Q(\xi_{0})\right]  U_{\delta,\xi}^{p}\right\Vert
_{\frac{2n}{n+2}}  &  =\left(  \int_{\widetilde{\Omega}_{\epsilon}}\left\vert
Q(\delta y+\delta\eta+\xi_{0})-Q(\xi_{0})\right\vert ^{\frac{2n}{n+2}}%
U^{p+1}(y)dy\right)  ^{\frac{n+2}{2n}}\nonumber\\
&  =\delta\left(  \int_{\widetilde{\Omega}_{\epsilon}}\left\vert \left\langle
\nabla Q(t\delta y+t\delta\eta+\xi_{0}),y+\eta\right\rangle \right\vert
^{\frac{2n}{n+2}}U^{p+1}(y)dy\right)  ^{\frac{n+2}{2n}}\label{Q}\\
&  \leq c\delta.\nonumber
\end{align}
This proves statement (\ref{30}).

Next we show that we may choose $\epsilon_{0}>0$ such that, for each
$\epsilon\in(0,\epsilon_{0}),$ the operator
\[
T_{d,\eta}^{\epsilon}:\{\phi\in K_{d,\eta}^{\epsilon,\bot}:\left\Vert
\phi\right\Vert \leq c\epsilon^{\frac{n-2}{n-1}}\}\rightarrow\{\phi\in
K_{d,\eta}^{\epsilon,\bot}:\left\Vert \phi\right\Vert \leq c\epsilon
^{\frac{n-2}{n-1}}\}
\]
is a contraction and, therefore, has a unique fixed point, as claimed.

If $\phi_{1},\phi_{2}\in\{\phi\in K_{d,\eta}^{\epsilon,\bot}:\left\Vert
\phi\right\Vert \leq c\epsilon^{\frac{n-2}{n-1}}\}$, using again the mean
value theorem we obtain%
\begin{align*}
\left\Vert T_{d,\eta}^{\epsilon}(\phi_{1})-T_{d,\eta}^{\epsilon}(\phi
_{2})\right\Vert  &  \leq c\left\Vert f(V_{d,\eta}+\phi_{1})-f(V_{d,\eta}%
+\phi_{2})-f^{\prime}(V_{d,\eta})(\phi_{1}-\phi_{2}))\right\Vert _{\frac
{2n}{n+2}}\\
&  =c\left\Vert [f^{\prime}(V_{d,\eta}+(1-t)\phi_{1}+\phi_{2})-f^{\prime
}(V_{d,\eta})](\phi_{1}-\phi_{2})\right\Vert _{\frac{2n}{n+2}}\\
&  \leq c\left\Vert f^{\prime}(V_{d,\eta}+(1-t)\phi_{1}+\phi_{2})-f^{\prime
}(V_{d,\eta})\right\Vert _{\frac{n}{2}}\left\Vert \phi_{1}-\phi_{2}\right\Vert
_{2^{\ast}}%
\end{align*}
for some $t\in\lbrack0,1],$ and arguing as before we conclude that
\begin{align*}
\left\Vert f^{\prime}(V_{d,\eta}+(1-t)\phi_{1}+\phi_{2})-f^{\prime}(V_{d,\eta
})\right\Vert _{\frac{n}{2}}  &  \leq c\left(  \left\Vert (1-t)\phi_{1}%
+\phi_{2}\right\Vert _{2^{\ast}}+\left\Vert (1-t)\phi_{1}+\phi_{2}\right\Vert
_{2^{\ast}}^{\frac{4}{n-2}}\right) \\
&  \leq c\left(  \left\Vert \phi_{1}\right\Vert _{2^{\ast}}+\left\Vert
\phi_{2}\right\Vert _{2^{\ast}}+\left\Vert \phi_{1}\right\Vert _{2^{\ast}%
}^{\frac{4}{n-2}}+\left\Vert \phi_{2}\right\Vert _{2^{\ast}}^{\frac{4}{n-2}%
}\right)
\end{align*}
Hence, if $\epsilon$ is sufficiently small, it follows that
\[
\left\Vert T_{d,\eta}^{\epsilon}(\phi_{1})-T_{d,\eta}^{\epsilon}(\phi
_{2})\right\Vert \leq\kappa\left\Vert \phi_{1}-\phi_{2}\right\Vert
\]
with $\kappa\in(0,1)$.

Finally, a standard argument shows that $(d,\eta)\mapsto\phi_{d,\eta
}^{\epsilon}$ is a $C^{1}$-map. This concludes the proof.
\end{proof}

Consider the functional $J_{\epsilon}:H_{0}^{1}(\Omega_{\epsilon}%
)\rightarrow\mathbb{R}$ defined by
\[
J_{\epsilon}(u):=\frac{1}{2}\int_{\Omega_{\epsilon}}|\nabla u|^{2}-\frac
{1}{p+1}\int_{\Omega_{\epsilon}}Q|u|^{p+1}.
\]
It is well known that the critical points of $J_{\epsilon}$ are the solutions
of problem (\ref{defp}). We define the reduced energy functional
$\widetilde{J}_{\epsilon}^{\Gamma}:\Lambda^{\Gamma}\rightarrow\mathbb{R}$ by
\begin{equation}
\widetilde{J}_{\epsilon}^{\Gamma}(d,\eta):=J_{\epsilon}(V_{d,\eta}%
+\phi_{d,\eta}^{\epsilon}). \label{fun1}%
\end{equation}
If $\Gamma=\{1\}$ is the trivial group, we simply write $\widetilde
{J}_{\epsilon}$ instead of $\widetilde{J}_{\epsilon}^{\Gamma}$ and $\Lambda$
instead of $\Lambda^{\Gamma}.$

Next we show that the critical points of $\widetilde{J}_{\epsilon}^{\Gamma}$
are $\Gamma$-invariant solutions of problem (\ref{defp}).

\begin{proposition}
\label{prop6}If $(d,\eta)\in\Lambda^{\Gamma}$ is a critical point of the
function $\widetilde{J}_{\epsilon}^{\Gamma},$ then $V_{d,\eta}+\phi_{d,\eta
}^{\epsilon}\in H_{0}^{1}(\Omega_{\epsilon})^{\Gamma}$ is a critical point of
the functional $J_{\epsilon}$ and, therefore, a $\Gamma$-invariant solution of
problem \emph{(\ref{defp})}.
\end{proposition}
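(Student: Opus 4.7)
The strategy is a standard Lyapunov--Schmidt conclusion combined with Palais's principle of symmetric criticality. From (\ref{ecpiort}) I extract Lagrange multipliers; the hypothesis $\nabla\widetilde{J}_{\epsilon}^{\Gamma}(d,\eta)=0$ will force those multipliers to vanish; and symmetric criticality will upgrade the resulting $\Gamma$-invariant critical point equation to a full critical point equation on $H_{0}^{1}(\Omega_{\epsilon})$.

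First, since $\phi_{d,\eta}^{\epsilon}\in K_{d,\eta}^{\epsilon,\bot}$, the relation (\ref{ecpiort}) places the element $V_{d,\eta}+\phi_{d,\eta}^{\epsilon}-i_{\epsilon}^{\ast}(Qf(V_{d,\eta}+\phi_{d,\eta}^{\epsilon}))$ in $K_{d,\eta}^{\epsilon}$, so there exist scalars $c_{j}=c_{j}(d,\eta,\epsilon)$, $j=0,\dots,n$, with
$$
V_{d,\eta}+\phi_{d,\eta}^{\epsilon}-i_{\epsilon}^{\ast}\bigl[Qf(V_{d,\eta}+\phi_{d,\eta}^{\epsilon})\bigr]=\sum_{j=0}^{n}c_{j}\,P_{\epsilon}\psi_{\delta,\xi}^{j}.
$$
Pairing with an arbitrary $\varphi\in H_{0}^{1}(\Omega_{\epsilon})^{\Gamma}$ and invoking the defining identity (\ref{ecu2}) of $i_{\epsilon}^{\ast}$ yields
$$
J_{\epsilon}'(V_{d,\eta}+\phi_{d,\eta}^{\epsilon})[\varphi]=\sum_{j=0}^{n}c_{j}\bigl(P_{\epsilon}\psi_{\delta,\xi}^{j},\varphi\bigr).
$$

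Next I would differentiate $\widetilde{J}_{\epsilon}^{\Gamma}(d,\eta)=J_{\epsilon}(V_{d,\eta}+\phi_{d,\eta}^{\epsilon})$ with respect to $d$ and to each component of $\eta$ along $(\mathbb{R}^{n})^{\Gamma}$, and test the identity above with the chain-rule derivatives $\partial_{d}(V_{d,\eta}+\phi_{d,\eta}^{\epsilon})$ and $\partial_{\eta_{i}}(V_{d,\eta}+\phi_{d,\eta}^{\epsilon})$. Using $\delta=d\epsilon^{(n-2)/(n-1)}$ and $\xi=\xi_{0}+\delta\eta$, one checks that $\partial_{d}V_{d,\eta}$ and $\partial_{\eta_{i}}V_{d,\eta}$ are explicit linear combinations of the $P_{\epsilon}\psi_{\delta,\xi}^{j}$'s, so the resulting linear system in $(c_{j})$ has, as its leading part, a nonzero multiple of the Gram matrix of $\{P_{\epsilon}\psi_{\delta,\xi}^{j}\}$; this matrix is known to be uniformly nondegenerate as $(d,\eta)$ varies in a compact subset of $\Lambda^{\Gamma}$. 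The remainder terms $(P_{\epsilon}\psi_{\delta,\xi}^{j},\partial_{\ast}\phi_{d,\eta}^{\epsilon})$ are handled by differentiating the orthogonality conditions $(\phi_{d,\eta}^{\epsilon},P_{\epsilon}\psi_{\delta,\xi}^{j})=0$, which reduces them to $O(\|\phi_{d,\eta}^{\epsilon}\|)=O(\epsilon^{(n-2)/(n-1)})$ by Proposition \ref{prop3}. For $\epsilon$ small the system is therefore invertible and every $c_{j}$ vanishes, whence $J'_{\epsilon}(V_{d,\eta}+\phi_{d,\eta}^{\epsilon})[\varphi]=0$ for every $\varphi\in H_{0}^{1}(\Omega_{\epsilon})^{\Gamma}$.

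To conclude, $J_{\epsilon}$ is invariant under the isometric action of the closed subgroup $\Gamma\subset O(n)$ (both the Dirichlet form and $\int_{\Omega_{\epsilon}}Q|u|^{p+1}$ are $\Gamma$-invariant, the latter because $Q$ is), and $V_{d,\eta}+\phi_{d,\eta}^{\epsilon}\in H_{0}^{1}(\Omega_{\epsilon})^{\Gamma}$; Palais's principle of symmetric criticality therefore upgrades the preceding vanishing to $J'_{\epsilon}(V_{d,\eta}+\phi_{d,\eta}^{\epsilon})[\varphi]=0$ for every $\varphi\in H_{0}^{1}(\Omega_{\epsilon})$, which is exactly (\ref{defp}). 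I expect the delicate step to be the invertibility of the $(c_{j})$-system: one must combine the asymptotic expansions of $(P_{\epsilon}\psi_{\delta,\xi}^{j},P_{\epsilon}\psi_{\delta,\xi}^{k})$, the explicit shape of $\partial_{d}V_{d,\eta}$ and $\partial_{\eta_{i}}V_{d,\eta}$, and the $\|\phi_{d,\eta}^{\epsilon}\|$-bound from Proposition \ref{prop3}.
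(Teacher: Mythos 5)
Your overall strategy (extract multipliers from (\ref{ecpiort}), kill them using the vanishing of the derivatives of the reduced functional, finish with symmetric criticality) is the standard Ljapunov--Schmidt conclusion, and for $\Gamma=\{1\}$ it is essentially the argument the paper invokes by citation (Lemma 6.1 in \cite{dfm2}, Proposition 2.2 in \cite{Ge-Mu-Pi}). But for nontrivial $\Gamma$ your linear system for the multipliers is underdetermined, and this is a genuine gap. The hypothesis is that $(d,\eta)$ is a critical point of $\widetilde{J}_{\epsilon}^{\Gamma}$, which lives on $\Lambda^{\Gamma}=(0,\infty)\times(\mathbb{R}^{n})^{\Gamma}$; so, as you yourself write, you may only differentiate in the direction of $d$ and of the components of $\eta$ along $(\mathbb{R}^{n})^{\Gamma}$. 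That produces $1+\dim(\mathbb{R}^{n})^{\Gamma}$ equations for the $n+1$ unknowns $c_{0},\dots,c_{n}$, and in the actual applications of Theorem \ref{ts} this is a strict deficit (for $\Gamma=O(m)$ acting on the last $m$ coordinates of $\mathbb{R}^{m+1}$ one has $\dim(\mathbb{R}^{n})^{\Gamma}=1$ while $n=m+1$). Nondegeneracy of the Gram matrix cannot help when there are fewer equations than unknowns. To close the argument along your route you must observe that the non-invariant multipliers vanish automatically: the element $V_{d,\eta}+\phi_{d,\eta}^{\epsilon}-i_{\epsilon}^{\ast}[Qf(V_{d,\eta}+\phi_{d,\eta}^{\epsilon})]$ is $\Gamma$-invariant, $P_{\epsilon}$ commutes with the $\Gamma$-action, and $\sum_{j=1}^{n}v_{j}\psi_{\delta,\xi}^{j}$ is $\Gamma$-invariant if and only if $v\in(\mathbb{R}^{n})^{\Gamma}$ (the components of $\nabla U_{\delta,\xi}$ transform as a vector under $\Gamma$); hence $(c_{1},\dots,c_{n})\in(\mathbb{R}^{n})^{\Gamma}$ and the system becomes square. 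You never make this observation, so as written the step ``every $c_{j}$ vanishes'' does not follow.

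The paper sidesteps this entirely by applying Palais's principle at the finite-dimensional level first: a critical point of $\widetilde{J}_{\epsilon}^{\Gamma}$ on $\Lambda^{\Gamma}$ is a critical point of the unrestricted $\widetilde{J}_{\epsilon}$ on all of $(0,\infty)\times\mathbb{R}^{n}$, after which the non-symmetric multiplier argument --- with all $n+1$ derivatives available --- applies verbatim. Your placement of symmetric criticality at the very end, upgrading $J_{\epsilon}'(V_{d,\eta}+\phi_{d,\eta}^{\epsilon})\varphi=0$ from $\varphi\in H_{0}^{1}(\Omega_{\epsilon})^{\Gamma}$ to all of $H_{0}^{1}(\Omega_{\epsilon})$, is correct and indeed necessary in your route, but it cannot repair the earlier counting deficit. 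Either insert the equivariance observation above, or restructure the proof as the paper does (symmetric criticality on $\Lambda$ first, then the trivial-group case).
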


\begin{proof}
Assume first that $\Gamma$ is the trivial group. Then $\Lambda=(0,\infty
)\times\mathbb{R}^{n}$ and the statement is proved using similar arguments to
those given to prove Lemma 6.1 in \cite{dfm2}\ or Proposition 2.2 in
\cite{Ge-Mu-Pi}.

If $\Gamma$ is an arbitrary closed subgroup of $O(n),$ then $\Lambda^{\Gamma}$
is the set of $\Gamma$-fixed points in $\Lambda$ of the action of $\Gamma$ on
the space $\mathbb{R}\times\mathbb{R}^{n}$ which is given by $g(t,x):=(t,gx)$
for $g\in\Gamma,$ $t\in\mathbb{R}$, $x\in\mathbb{R}^{n}.$ By the principle of
symmetric criticality \cite{Palais, Willem}, if $(d,\eta)\in\Lambda^{\Gamma}$
is a critical point of the function $\widetilde{J}_{\epsilon}^{\Gamma},$ then
$(d,\eta)$ is a critical point of $\widetilde{J}_{\epsilon}:(0,\infty
)\times\mathbb{R}^{n}\rightarrow\mathbb{R}$, and the result follows from the
previous case.
\end{proof}

\section{The asymptotic expansion of the reduced energy functional}

In order to find a critical point of $\widetilde{J}_{\epsilon}^{\Gamma}$ we
will use the following asymptotic expansion of the functional $\widetilde
{J}_{\epsilon}:(0,\infty)\times\mathbb{R}^{n}\rightarrow\mathbb{R}$.

\begin{proposition}
\label{lemaf}The asymptotic expansion
\[
\widetilde{J}_{\epsilon}(d,\eta)=c_{0}+Q(\xi_{0})^{-\frac{2}{p-1}}%
F(d,\eta)\epsilon^{\frac{n-2}{n-1}}+o(\epsilon^{\frac{n-2}{n-1}})
\]
holds true $C^{1}$-uniformly on compact subsets of $\Lambda$, where the
function $F:(0,\infty)\times\mathbb{R}^{n}\rightarrow\mathbb{R}$ is given by%
\begin{equation}
F(d,\eta):=\left\{
\begin{array}
[c]{ll}%
\alpha d+\beta\frac{1}{(1+\left\vert \eta\right\vert ^{2})d}-\gamma
\left\langle \frac{\nabla Q(\xi_{0})}{Q(\xi_{0})},\eta\right\rangle d &
\text{if }n=3,\medskip\\
\beta\left(  \frac{1}{(1+\left\vert \eta\right\vert ^{2})d}\right)
^{n-2}-\gamma\left\langle \frac{\nabla Q(\xi_{0})}{Q(\xi_{0})},\eta
\right\rangle d & \text{if }n\geq4.
\end{array}
\right.  \label{bpc}%
\end{equation}
for some positive constants $c_{0},\alpha,\beta$ and $\gamma$.
\end{proposition}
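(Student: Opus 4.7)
My plan has two stages: first show that the correction $\phi_{d,\eta}^\epsilon$ contributes only at negligible order, reducing the task to an expansion of $J_\epsilon(V_{d,\eta})$; then compute the latter explicitly using Lemma \ref{lemGeMuPi} and the scaling $\delta = d\epsilon^{(n-2)/(n-1)}$.

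For the first stage, Taylor's formula gives
\[
J_\epsilon(V_{d,\eta}+\phi) = J_\epsilon(V_{d,\eta}) + J_\epsilon'(V_{d,\eta})[\phi] + \tfrac{1}{2}J_\epsilon''(V_{d,\eta})[\phi,\phi] + O(\|\phi\|^{\min(3,p+1)}).
\]
Since $-\Delta V_{d,\eta} = Q(\xi_0) W_{\delta,\xi}^p$ in $\Omega_\epsilon$, we have $V_{d,\eta} = i_\epsilon^*(Q(\xi_0) W_{\delta,\xi}^p)$, so $J_\epsilon'(V_{d,\eta})[v] = (V_{d,\eta}-i_\epsilon^*(Qf(V_{d,\eta})),v)$, and the estimates (\ref{power}), (\ref{Q}) yield $\|V_{d,\eta}-i_\epsilon^*(Qf(V_{d,\eta}))\| = O(\delta)$. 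Together with $\|\phi_{d,\eta}^\epsilon\|\le c\epsilon^{(n-2)/(n-1)}$ this gives $\widetilde{J}_\epsilon(d,\eta) = J_\epsilon(V_{d,\eta}) + O(\epsilon^{2(n-2)/(n-1)}) = J_\epsilon(V_{d,\eta}) + o(\epsilon^{(n-2)/(n-1)})$. For $J_\epsilon(V_{d,\eta})$ itself I would write
\[
J_\epsilon(V_{d,\eta}) = \frac{Q(\xi_0)}{2}\int_{\Omega_\epsilon} W_{\delta,\xi}^p V_{d,\eta} - \frac{Q(\xi_0)}{p+1}\int_{\Omega_\epsilon} V_{d,\eta}^{p+1} - \frac{1}{p+1}\int_{\Omega_\epsilon}(Q-Q(\xi_0))V_{d,\eta}^{p+1}
\]
and substitute the decomposition $V_{d,\eta} = W_{\delta,\xi} - \gamma_0\alpha_n\delta^{(n-2)/2}H(\cdot,\xi) - \gamma_0\alpha_n\epsilon^{n-2}\delta^{-(n-2)/2}(1+|\eta|^2)^{-(n-2)/2}|\cdot-\xi_0|^{-(n-2)} + \gamma_0 R$ from Lemma \ref{lemGeMuPi}.

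The pure $W$-terms combine into the leading constant $c_0 = (\tfrac12-\tfrac1{p+1})Q(\xi_0)^{-2/(p-1)}\int_{\R^n} U^{p+1}$ (after extending the integration to $\R^n$, with $o(\epsilon^{(n-2)/(n-1)})$ truncation error). The cross terms with the $H$-piece produce $C_H H(\xi_0,\xi_0)\delta^{n-2}$; since $\delta^{n-2} = d^{n-2}\epsilon^{(n-2)^2/(n-1)}$, this contributes exactly at order $\epsilon^{(n-2)/(n-1)}$ when $n = 3$ (giving the $\alpha d$ term) and is $o(\epsilon^{(n-2)/(n-1)})$ for $n\ge 4$. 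The cross terms with the perforation piece, after rescaling $y = (x-\xi)/\delta$, equal $\epsilon^{n-2}\delta^{-(n-2)}(1+|\eta|^2)^{-(n-2)/2}$ times the Newton-potential integral $\int_{\R^n} U^p(y)|y+\eta|^{-(n-2)}\,dy = \kappa(1+|\eta|^2)^{-(n-2)/2}$, which with $\delta = d\epsilon^{(n-2)/(n-1)}$ produces $\beta[(1+|\eta|^2)d]^{-(n-2)}\epsilon^{(n-2)/(n-1)}$. Finally, Taylor-expanding $Q(x)-Q(\xi_0) = \langle\nabla Q(\xi_0), x-\xi_0\rangle + O(|x-\xi_0|^2)$ inside the third integral and rescaling, the quadratic remainder is $O(\delta^2)$, the odd-in-$y$ part vanishes, and the shift term $\delta\langle\nabla Q(\xi_0),\eta\rangle\int_{\R^n}U^{p+1}$ produces the gradient term $-\gamma\langle\nabla Q(\xi_0)/Q(\xi_0),\eta\rangle d\,\epsilon^{(n-2)/(n-1)}$ after extracting the $Q(\xi_0)^{-2/(p-1)}$ normalization. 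The remainder $R$ and the truncation of $\Omega_\epsilon$ are absorbed via the pointwise bounds in Lemma \ref{lemGeMuPi} and the decay of $U_{\delta,\xi}$.

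For the $C^1$-uniformity the same strategy applies after differentiation in $(d,\eta)$, using the derivative estimates on $R$ in Lemma \ref{lemGeMuPi} and the fact that $\partial_d\phi_{d,\eta}^\epsilon$ and $\partial_\eta\phi_{d,\eta}^\epsilon$ remain $O(\epsilon^{(n-2)/(n-1)})$ by the $C^1$-dependence from Proposition \ref{prop3}. The main obstacle is the careful bookkeeping of the competing error terms at critical order $o(\epsilon^{(n-2)/(n-1)})$ uniformly on compacts, most delicately in dimension $n = 3$ where the Robin and perforation contributions have the same order and must be separated by their $d$-dependence; the positivity of $\alpha,\beta,\gamma$ then follows from the explicit integrals. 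The arithmetic parallels that of the expansions carried out in \cite{Ge-Mu-Pi}, and the adaptation is mechanical though lengthy.
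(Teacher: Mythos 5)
Your proposal is correct and follows essentially the same route as the paper: reduce to $J_{\epsilon}(V_{d,\eta})$ up to $O(\epsilon^{2(n-2)/(n-1)})$ using (\ref{ecu21}), (\ref{power}), (\ref{Q}), then expand via the decomposition of $P_{\epsilon}U_{\delta,\xi}$ in Lemma \ref{lemGeMuPi}, with the Robin term surviving only for $n=3$, the perforation term giving $\beta[(1+|\eta|^{2})d]^{-(n-2)}$, and the Taylor expansion of $Q$ giving the gradient term. The only cosmetic difference is that you phrase the first reduction via Taylor's formula for $J_{\epsilon}$ while the paper writes out the difference directly; the computations coincide.
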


\begin{proof}
We write
\begin{align*}
J_{\epsilon}(V_{d,\eta}+\phi_{d,\eta}^{\epsilon})  &  =\frac{1}{2}\left\Vert
V_{d,\eta}+\phi_{d,\eta}^{\epsilon}\right\Vert ^{2}-\frac{1}{p+1}\int
_{\Omega_{\epsilon}}Q\left\vert V_{d,\eta}+\phi_{d,\eta}^{\epsilon}\right\vert
^{p+1}\\
&  =J_{\epsilon}(V_{d,\eta})+\gamma_{0}\int_{\Omega_{\epsilon}}(U_{\delta,\xi
}^{p}-\left(  P_{\epsilon}U_{\delta,\xi}\right)  ^{p})\phi_{d,\eta}^{\epsilon
}\\
&  -\gamma_{0}^{p}\int_{\Omega_{\epsilon}}\left[  Q-Q(\xi_{0})\right]  \left(
P_{\epsilon}U_{\delta,\xi}\right)  ^{p}\phi_{d,\eta}^{\epsilon}+\frac{1}%
{2}\left\Vert \phi_{d,\eta}^{\epsilon}\right\Vert ^{2}\\
&  -\frac{1}{p+1}\int_{\Omega_{\epsilon}}Q\left(  \left\vert V_{d,\eta}%
+\phi_{d,\eta}^{\epsilon}\right\vert ^{p+1}-\left\vert V_{d,\eta}\right\vert
^{p+1}-(p+1)V_{d,\eta}^{p}\phi_{d,\eta}^{\epsilon}\right)  .
\end{align*}
Then, using H\"{o}lder's inequality and inequalities (\ref{ecu21}),
(\ref{power}) and (\ref{Q}) we obtain%
\begin{align}
J_{\epsilon}(V_{d,\eta}+\phi_{d,\eta}^{\epsilon})  &  =J_{\epsilon}(V_{d,\eta
})+O\left(  \epsilon^{\frac{2(n-2)}{n-1}}\right) \nonumber\\
&  =\gamma_{0}^{2}\left[  \frac{1}{2}\int_{\Omega_{\epsilon}}U_{\delta,\xi
}^{p}\left(  P_{\epsilon}U_{\delta,\xi}\right)  -\frac{1}{p+1}\int
_{\Omega_{\epsilon}}\left\vert P_{\epsilon}U_{\delta,\xi}\right\vert
^{p+1}\right] \label{red}\\
&  -\frac{1}{p+1}\gamma_{0}^{p+1}\int_{\Omega_{\epsilon}}\left[  Q-Q(\xi
_{0})\right]  \left\vert P_{\epsilon}U_{\delta,\xi}\right\vert ^{p+1}+O\left(
\epsilon^{\frac{2(n-2)}{n-1}}\right)  .\nonumber
\end{align}
Next, we compute the first summand on the right-hand side of equality
(\ref{red}). From Lemma \ref{lemGeMuPi} we have that%
\begin{align*}
&  \frac{1}{2}\int_{\Omega_{\epsilon}}U_{\delta,\xi}^{p}\left(  P_{\epsilon
}U_{\delta,\xi}\right)  -\frac{1}{p+1}\int_{\Omega_{\epsilon}}\left\vert
P_{\epsilon}U_{\delta,\xi}\right\vert ^{p+1}\\
&  =\frac{p-1}{2(p+1)}\int_{\Omega_{\epsilon}}U_{\delta,\xi}^{p+1}-\frac{1}%
{2}\int_{\Omega_{\epsilon}}U_{\delta,\xi}^{p}\left(  P_{\epsilon}U_{\delta
,\xi}-U_{\delta,\xi}\right)  -\frac{1}{p+1}\int_{\Omega_{\epsilon}}\left\vert
\left\vert P_{\epsilon}U_{\delta,\xi}\right\vert ^{p+1}-U_{\delta,\xi}%
^{p+1}\right\vert \\
&  =\frac{p-1}{2(p+1)}\int_{\Omega_{\epsilon}}U_{1,0}^{p+1}-\frac{1}{2}%
\int_{\Omega_{\epsilon}}U_{\delta,\xi}^{p}\left(  P_{\epsilon}U_{\delta,\xi
}-U_{\delta,\xi}\right)  +o(\epsilon^{\frac{n-2}{n-1}})\\
&  =\frac{p-1}{2(p+1)}\int_{\mathbb{R}^{n}}U_{1,0}^{p+1}+\frac{1}{2}%
\int_{\mathbb{R}^{n}}U_{\delta,\xi}^{p}\Upsilon_{\delta,\xi}^{\epsilon
}+o(\epsilon^{\frac{n-2}{n-1}}),
\end{align*}
where
\begin{equation}
\Upsilon_{\delta,\xi}^{\epsilon}(x):=\alpha_{n}\delta^{\frac{n-2}{2}}%
H(x,\xi)+\alpha_{n}\frac{1}{\delta^{\frac{n-2}{2}}(1+|\eta|^{2})^{\frac
{n-2}{2}}}\frac{\epsilon^{n-2}}{|x-\xi_{0}|^{n-2}}. \label{ecu41}%
\end{equation}
Setting $x=\xi+\delta y$ we have
\begin{align*}
&  \alpha_{n}\int_{\mathbb{R}^{n}}U_{\delta,\xi}^{p}\Upsilon_{\delta,\xi
}^{\epsilon}\\
&  =\alpha_{n}\int_{\mathbb{R}^{n}}U_{\delta,\xi}^{p}(x)(\delta^{\frac{n-2}%
{2}}H(x,\xi))dx+\alpha_{n}\int_{\mathbb{R}^{n}}U_{\delta,\xi}^{p}(x)\left(
\frac{1}{\delta^{\frac{n-2}{2}}(1+|\eta|^{2})^{\frac{n-2}{2}}}\frac
{\epsilon^{n-2}}{|x-\xi_{0}|^{n-2}}\right)  dx\\
&  =\alpha_{n}\delta^{n-2}\int_{\mathbb{R}^{n}}U_{1,0}^{p}(y)H(\delta
y+\delta\eta+\xi_{0},\delta\eta+\xi_{0})dy\\
&  +\alpha_{n}\frac{1}{(1+|\eta|^{2})^{\frac{n-2}{2}}}\int_{\mathbb{R}^{n}%
}U_{1,0}^{p}(y)\left(  \frac{\epsilon^{n-2}}{\delta^{n-2}|y-\eta|^{n-2}%
}\right)  dy\\
&  =\alpha_{n} \left(  \int_{\mathbb{R}^{n}}U_{1,0}^{p}\right)  H(\xi_{0}%
,\xi_{0})\delta^{n-2}(1+o(1))+\alpha_{n}g(\eta)\frac{1}{\delta^{n-2}}%
\epsilon^{n-2}(1+o(1)),
\end{align*}
where the function $g:\mathbb{R}^{n}\rightarrow\mathbb{R}$ is defined by
\[
g(\eta):={\frac{1}{(1+|\eta|^{2})^{\frac{n-2}{2}}}\int_{\mathbb{R}^{n}}%
\frac{1}{|y-\eta|^{n-2}}}U_{1,0}^{p}(y)dy.
\]
Since $-\Delta U=U^{p}$ in $\mathbb{R}^{n},$ an easy computation shows that
\[
g(\eta)={\frac{1}{(1+|\eta|^{2})^{\frac{n-2}{2}}}}U_{1,0}(\eta)=\alpha
_{n}{\frac{1}{(1+|\eta|^{2})^{n-2}}}.
\]
To compute the second summand on the right-hand side of equality (\ref{red})
we use the Taylor expansion
\[
Q(\delta y+\xi_{0}+\delta\eta)=Q(\xi_{0})+\delta\langle\nabla Q(\xi
_{0}),y+\eta\rangle+O(\delta^{2}(1+|y|^{2}))
\]
to obtain
\begin{align*}
&  \int_{\Omega_{\epsilon}}\left[  Q-Q(\xi_{0})\right]  |P_{\epsilon}%
U_{\delta,\xi}|^{p+1}=\int_{\Omega_{\epsilon}}\left[  Q-Q(\xi_{0})\right]
U_{\delta,\xi}^{p+1}+o(\epsilon^{\frac{n-2}{n-1}})\\
&  =\int_{\widetilde{\Omega}_{\epsilon}}(Q(\delta y+\xi_{0}+\delta\eta
)-Q(\xi_{0}))U_{1,0}^{p+1}(y)dy+o(\epsilon^{\frac{n-2}{n-1}})\\
&  =\delta\int_{\mathbb{R}^{n}}\langle\nabla Q(\xi_{0}),\eta\rangle
U_{1,0}^{p+1}(y)dy+\delta\int_{\mathbb{R}^{n}}\frac{\langle\nabla Q(\xi
_{0}),y\rangle}{(1+|y|^{2})^{n}}dy+O\left(  \epsilon^{\frac{2(n-2)}{n-1}%
}\right) \\
&  =\delta\langle\nabla Q(\xi_{0}),\eta\rangle\left(  \int_{\mathbb{R}^{n}%
}U_{1,0}^{p+1}\right)  (1+o(1)),
\end{align*}
because $\int_{\mathbb{R}^{n}}\frac{\langle\nabla Q(\xi_{0}),y\rangle
}{(1+|y|^{2})^{n}}dy=0$. Collecting all the previous information we obtain%
\begin{align*}
&  \widetilde{J}_{\epsilon}(d,\eta)=J_{\epsilon}(V_{d,\eta}+\phi_{d,\eta
}^{\epsilon})\\
&  =\left\{
\begin{array}
[c]{ll}%
c_{0}+\gamma_{0}^{2}\left(  c_{1}H(\xi_{0},\xi_{0})d+c_{2}g(\eta)\frac{1}%
{d}-c_{3}\left\langle \frac{\nabla Q(\xi_{0})}{Q(\xi_{0})},\eta\right\rangle
d\right)  \sqrt{\epsilon}+o(\sqrt{\epsilon}) & \text{if }n=3,\medskip\\
c_{0}+\gamma_{0}^{2}\left(  c_{2}g(\eta)\frac{1}{d^{n-2}}-c_{3}\left\langle
\frac{\nabla Q(\xi_{0})}{Q(\xi_{0})},\eta\right\rangle d\right)
\epsilon^{\frac{n-2}{n-1}}+o(\epsilon^{\frac{n-2}{n-1}}) & \text{if }n\geq4,
\end{array}
\right.
\end{align*}
as claimed.\medskip
\end{proof}

\noindent\textbf{Proof of theorem \ref{ts}.\quad}We will show that the
function $F$ defined in (\ref{bpc}) has a critical point $(d_{0},\eta_{0}%
)\in\Lambda^{\Gamma}=(0,\infty)\times\left(  \mathbb{R}^{n}\right)  ^{\Gamma}$
which is stable under $C^{1}$-perturbations. Then, we deduce from Proposition
\ref{lemaf} that the functional $\widetilde{J}_{\epsilon}^{\Gamma}$ has a
critical point in $\Lambda^{\Gamma}$ for $\epsilon$ small enough, so the
result follows from Proposition \ref{prop6}.

Let $n=3.$ Set $\zeta_{0}:=\frac{\nabla Q(\xi_{0})}{Q(\xi_{0})}\ $and consider
the half space $\mathcal{H}:=\{\eta\in\mathbb{R}^{3}:\alpha-\gamma\langle
\zeta_{0},\eta\rangle>0\}$. For each $\eta\in\mathcal{H}$ there exists a
unique $d=d(\eta),$ given by
\[
d(\eta)=\sqrt{\frac{\beta}{(1+|\eta|^{2})(\alpha-\gamma\langle\zeta_{0}%
,\eta\rangle)}}\in(0,\infty),
\]
such that $F_{d}(d,\eta)=0$. Moreover, $F_{dd}(d(\eta),\eta)>0$ for any
$\eta\in\mathcal{H}$. Consider the function $\widetilde{F}:\mathcal{H}%
\rightarrow\mathbb{R}$ defined by
\[
\widetilde{F}(\eta):=F(d(\eta),\eta)=2\beta^{2}\sqrt{\frac{\alpha
-\gamma\langle\zeta_{0},\eta\rangle}{1+|\eta|^{2}}.}%
\]
The point
\[
\eta_{0}:=\left(  \frac{\alpha-\sqrt{\alpha^{2}+\gamma^{2}\left\vert \zeta
_{0}\right\vert ^{2}}}{\gamma\left\vert \zeta_{0}\right\vert ^{2}}\right)
\zeta_{0}%
\]
is a strict maximum point of $\widetilde{F}.$ Setting $d_{0}:=d(\eta_{0})$ we
deduce from Lemma 5.7 in \cite{Mo-Pi} that $(d_{0},\eta_{0})$ is a $C^{1}%
$-stable critical point of the function $F$. Note that, since $\xi_{0}%
\in\Omega^{\Gamma}$ and $Q$ is $\Gamma$-invariant, $\nabla Q(\xi_{0}%
)\in\left(  \mathbb{R}^{n}\right)  ^{\Gamma}.$ Hence, $(d_{0},\eta_{0}%
)\in\Lambda^{\Gamma}.$

If $n\geq4$ arguing as in the previous case we easily conclude that, if
\[
\eta_{0}:=-{\frac{\nabla Q(\xi_{0})}{|\nabla Q(\xi_{0})|},}\qquad
d_{0}:=\left(  {\frac{(n-2)\beta}{2^{n-2}\gamma}}{\frac{Q(\xi_{0})}{|\nabla
Q(\xi_{0})|}}\right)  ^{{\frac{1}{n-1}}},
\]
then $(d_{0},\eta_{0})$ is a $C^{1}$-stable critical point of the function $F$
and $(d_{0},\eta_{0})\in\Lambda^{\Gamma}.$ This concludes the proof.
\ \hfill$\square$

\section{Final remarks}

One may wonder whether Proposition \ref{ps} is also true in other dimensions.
We show that this is not so.

If $N=k_{1}+k_{2}$ we write the elements of $\mathbb{R}^{N}$ as $(y_{1}%
,y_{2})$ with $y_{i}\in\mathbb{R}^{k_{i}},$ and the elements of $\mathbb{R}%
^{m+1}$ as $(t,\zeta)$ with $t\in\mathbb{R},$ $\zeta\in\mathbb{R}^{m}.$

\begin{proposition}
\label{ps+}Let $N=k_{1}+k_{2},$ $\mathcal{D}$ be an $\left[  O(k_{1})\times
O(k_{2})\right]  $-invariant bounded smooth domain in $\mathbb{R}^{N}$ such
that $0\notin\overline{\mathcal{D}},$ and $f\in C^{0}(\mathbb{R}).$ Set
\[
\mathcal{U}:=\{(t,\zeta)\in\mathbb{R}\times\mathbb{R}^{m}:\mathfrak{h}%
_{\mathbb{R}}(\left\vert y_{1}\right\vert ,\left\vert y_{2}\right\vert
)=(t,\left\vert \zeta\right\vert )\text{ for some }(y_{1},y_{2})\in
\mathcal{D}\}
\]
and let $u\in C^{2}(\mathcal{U)}$, $u(t,\zeta)=\mathfrak{u}(t,\left\vert
\zeta\right\vert ),$ be an $O(m)$-invariant solution of equation%
\begin{equation}
-\Delta u=\frac{1}{2\left\vert x\right\vert }f(u) \label{eqB}%
\end{equation}
in $\mathcal{U}$. Then $v(y_{1},y_{2}):=\mathfrak{u}(\mathfrak{h}_{\mathbb{R}%
}(\left\vert y_{1}\right\vert ,\left\vert y_{2}\right\vert ))$ is an $\left[
O(k_{1})\times O(k_{2})\right]  $-invariant solution of equation%
\begin{equation}
-\Delta v=f(v) \label{eqA}%
\end{equation}
in $\mathcal{D}$ if and only if $k_{1}=k_{2}=m.$
\end{proposition}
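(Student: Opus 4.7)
The plan is a direct chain-rule calculation of $\Delta v$ on $\mathcal{D}\subset\mathbb{R}^{k_1+k_2}$, followed by substituting equation (\ref{eqB}) for $u$; the residual $-\Delta v - f(v)$ will emerge as an explicit combination of $\mathfrak{u}_t$ and $\mathfrak{u}_\rho$ whose coefficients depend on $k_1,k_2,m$ and on $(r_1,r_2):=(|y_1|,|y_2|)$. The ``if and only if'' will then reduce to a statement about when those coefficients vanish identically.

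First I would use $[O(k_1)\times O(k_2)]$-invariance to write $v(y_1,y_2)=\mathfrak{v}(r_1,r_2)$, so that
\[
\Delta v=(\partial_{r_1}^2+\partial_{r_2}^2)\mathfrak{v}+\frac{k_1-1}{r_1}\partial_{r_1}\mathfrak{v}+\frac{k_2-1}{r_2}\partial_{r_2}\mathfrak{v},
\]
and then expand via $\mathfrak{v}(r_1,r_2)=\mathfrak{u}(r_1^2-r_2^2,\,2r_1r_2)$. With $t:=r_1^2-r_2^2$, $\rho:=2r_1r_2$, the two key algebraic observations are: (i) the mixed partials $\mathfrak{u}_{t\rho}$ and the spurious $\mathfrak{u}_t$ terms cancel between $\partial_{r_1}^2\mathfrak{v}$ and $\partial_{r_2}^2\mathfrak{v}$, yielding $(\partial_{r_1}^2+\partial_{r_2}^2)\mathfrak{v}=4(r_1^2+r_2^2)(\mathfrak{u}_{tt}+\mathfrak{u}_{\rho\rho})$ (which reflects the conformality of $\mathfrak{h}_{\mathbb{R}}$); (ii) the first-order contributions combine to $2(k_1-k_2)\mathfrak{u}_t+\frac{2[r_2^2(k_1-1)+r_1^2(k_2-1)]}{r_1r_2}\mathfrak{u}_\rho$.

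Next I would use $|x|=r_1^2+r_2^2$ on the image of the Hopf map and rewrite (\ref{eqB}) as $\mathfrak{u}_{tt}+\mathfrak{u}_{\rho\rho}=-\tfrac{m-1}{\rho}\mathfrak{u}_\rho-\tfrac{f(\mathfrak{u})}{2|x|}$ to eliminate the second derivatives. Combining the resulting $\mathfrak{u}_\rho$-contributions, the computation collapses (modulo the overall normalization used in (\ref{eqA})--(\ref{eqB})) to an identity of the form
\[
-\Delta v - f(v) \;=\; -\,2(k_1-k_2)\,\mathfrak{u}_t \;-\; \frac{2\bigl[r_2^2(k_1-m)+r_1^2(k_2-m)\bigr]}{r_1 r_2}\,\mathfrak{u}_\rho.
\]
The sufficient direction is then immediate: if $k_1=k_2=m$, both bracketed coefficients vanish identically in $(r_1,r_2)$ and $v$ solves (\ref{eqA}).

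The main obstacle will be the ``only if'' direction: one must argue that the right-hand side above cannot vanish identically (over the class of solutions $u$ of (\ref{eqB})) unless $k_1-k_2=0$ and $k_1=k_2=m$. The polynomial $r_2^2(k_1-m)+r_1^2(k_2-m)$ vanishes on $(0,\infty)^2$ precisely when $k_1=m=k_2$, so the argument reduces to decoupling $\mathfrak{u}_t$ and $\mathfrak{u}_\rho$ within the solution class: by choosing $u$'s for which these two first derivatives are linearly independent at some interior point (e.g.\ via a local existence argument for (\ref{eqB}), or by testing with two explicit solutions obtained as perturbations of a standard bubble), the vanishing of the right-hand side forces each coefficient to vanish separately, which gives $k_1=k_2=m$.
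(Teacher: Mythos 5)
Your proposal is correct and follows essentially the same route as the paper: reduce both equations to their two-variable radial forms, exploit the conformality of $\mathfrak{h}_{\mathbb{R}}$ in two dimensions to convert the second-order terms (so that only the first-order drift terms can mismatch), and observe that the drift coefficients agree exactly when $k_{1}=k_{2}=m$. Your explicit attention to the ``only if'' direction --- decoupling $\mathfrak{u}_{t}$ from $\mathfrak{u}_{\rho}$ within the solution class --- and to the overall normalization constant is in fact more careful than the paper, which simply matches the coefficients of $\partial\mathfrak{u}/\partial x_{1}$ and $\partial\mathfrak{u}/\partial x_{2}$ without comment.
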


\begin{proof}
A straighforward computation shows that a function $v(y_{1},y_{2}%
)=\mathfrak{v}(\left\vert y_{1}\right\vert ,\left\vert y_{2}\right\vert )$
solves equation (\ref{eqA}) in $\{(y_{1},y_{2})\in\mathcal{D}:y_{1}\neq0,$
$y_{2}\neq0\}$ if and only if $\mathfrak{v}$ solves
\begin{equation}
-\Delta\mathfrak{v}-\frac{k_{1}-1}{z_{1}}\frac{\partial\mathfrak{v}}{\partial
z_{1}}-\frac{k_{2}-1}{z_{2}}\frac{\partial\mathfrak{v}}{\partial z_{2}%
}=f(\mathfrak{v}) \label{eq1}%
\end{equation}
in $\mathcal{D}_{0}:=\{z=(z_{1},z_{2})\in\mathbb{R}^{2}:z_{1},z_{2}>0,$
$z_{1}=\left\vert y_{1}\right\vert ,$ $z_{2}=\left\vert y_{2}\right\vert $,
$(y_{1},y_{2})\in\mathcal{D}\}.$

Similarly, a function $u(t,\zeta)=\mathfrak{u}(t,\left\vert \zeta\right\vert
)$ solves equation (\ref{eqB}) in $\{(t,\zeta)\in\mathcal{U}:\zeta\neq0\}$ if
and only if $\mathfrak{u}$ solves%
\begin{equation}
-\Delta\mathfrak{u}-\frac{m-1}{x_{2}}\frac{\partial\mathfrak{u}}{\partial
x_{2}}=\frac{1}{2\left\vert x\right\vert }f(\mathfrak{u}) \label{eq2}%
\end{equation}
in $\mathcal{U}_{0}:=\{x=(x_{1},x_{2})\in\mathbb{R}^{2}:x_{2}>0,$
$x_{2}=\left\vert \zeta\right\vert ,$ $(x_{1},\zeta)\in\mathcal{U}\}.$

Assuming that $\mathfrak{u}$ solves equation (\ref{eq2}) in $\mathcal{U}_{0}%
$,\ we show next that $\mathfrak{v}:=\mathfrak{u}\circ\mathfrak{h}%
_{\mathbb{R}}$ solves equation (\ref{eq1}) in $\mathcal{D}_{0}$\ if and only
if $k_{1}=k_{2}=m.$ A straightforward computation yields%
\begin{align*}
&  -\Delta\mathfrak{v}-\frac{k_{1}-1}{z_{1}}\frac{\partial\mathfrak{v}%
}{\partial z_{1}}-\frac{k_{2}-1}{z_{2}}\frac{\partial\mathfrak{v}}{\partial
z_{2}}\\
&  =2\left\vert z\right\vert ^{2}\left(  -\Delta\mathfrak{u}-\left[
\frac{k_{1}-1}{\left\vert z\right\vert ^{2}}-\frac{k_{2}-1}{\left\vert
z\right\vert ^{2}}\right]  \frac{\partial\mathfrak{u}}{\partial x_{1}}-\left[
\frac{k_{1}-1}{\left\vert z\right\vert ^{2}}\frac{z_{2}}{z_{1}}+\frac{k_{2}%
-1}{\left\vert z\right\vert ^{2}}\frac{z_{1}}{z_{2}}\right]  \frac
{\partial\mathfrak{u}}{\partial x_{2}}\right)  .
\end{align*}
Note that $\left\vert z\right\vert ^{2}=\left\vert \mathfrak{h}_{\mathbb{R}%
}(z)\right\vert .$ So if $\mathfrak{u}$ solves equation (\ref{eq2}),\ we have
that%
\[
-\Delta\mathfrak{v}-\frac{k_{1}-1}{z_{1}}\frac{\partial\mathfrak{v}}{\partial
z_{1}}-\frac{k_{2}-1}{z_{2}}\frac{\partial\mathfrak{v}}{\partial z_{2}%
}=|\mathfrak{v}|^{p-2}\mathfrak{v}%
\]
if and only if%
\[
\frac{k_{1}-1}{\left\vert z\right\vert ^{2}}=\frac{k_{2}-1}{\left\vert
z\right\vert ^{2}}\text{\qquad and\qquad}\frac{k_{1}-1}{\left\vert
z\right\vert ^{2}}\frac{z_{2}}{z_{1}}+\frac{k_{2}-1}{\left\vert z\right\vert
^{2}}\frac{z_{1}}{z_{2}}=\frac{m-1}{z_{1}z_{2}}%
\]
if and only if $k_{1}=k_{2}=m.$
\end{proof}

The argument given in \cite{ps} to prove Proposition \ref{ps} uses polar
coordinates. Note that if we write $z_{1}=r\cos\theta,$ $z_{2}=r\sin\theta,$
$x_{1}=\rho\cos\varphi,$ $x_{2}=\rho\sin\varphi,$ then the Hopf map
$x=\frac{1}{2}\mathfrak{h}_{\mathbb{R}}(z)$ becomes%
\[
\rho=\frac{1}{2}r^{2},\qquad\varphi=2\theta,
\]
which is the map considered in \cite{ps}.

\end{document}